\date{\empty}
\thanks{}
\renewcommand{\uppercasenonmath}[1]{}
\numberwithin{equation}{section} \theoremstyle{plain}
\newtheorem*{thm*}{Main Theorem}
\newtheorem{theorem}{Theorem}[section]
\newtheorem{corollary}[theorem]{Corollary}
\newtheorem*{corollary*}{Corollary}
\newtheorem*{claim*}{Claim}
\newtheorem{lemma}[theorem]{Lemma}
\newtheorem*{lemma*}{Lemma}
\newtheorem{proposition}[theorem]{Proposition}
\newtheorem*{proposition*}{Proposition}
\newtheorem{remark}[theorem]{Remark}
\newtheorem*{remark*}{Remark}
\newtheorem*{example*}{Example}
\newtheorem*{question*}{Question}
\newtheorem{definition}[theorem]{Definition}
\newtheorem*{definition*}{Definition}
\newtheorem*{acknowledgements*}{ACKNOWLEDGEMENTS}
\def\sbmatrix{\left[\begin{array}}
\def\endsbmatrix{\end{array}\right]}
\begin{document}
\begin{center}
{\large  \bf Right core inverse and the related generalized inverses}\\
\vspace{0.8cm}   Long Wang$^{1}$, Dijana Mosi\'{c}$^{2}$ and Yuefeng Gao$^{3}$\footnote{Corresponding author.
The research is supported by the NSFC (11771076), NSF of Jiangsu Province (BK20170589),
NSF of Jiangsu Higher Education Institutions of China (15KJB110021). The second author is supported by the Ministry of Science,
Republic of Serbia, grant no. 174007.}\\
\vspace{0.5cm} { $^{1}$School of Mathematics, Yangzhou University, Yangzhou, 225002, P. R. China\\
$^{2}$Faculty of Sciences and Mathematics, University of Ni\v{s}, Ni\v{s}, Serbia\\
$^{3}$School of Mathematics, Southeast University, Nanjing, 210096, P. R. China}
\end{center}

\bigskip

{ \bf Abstract:}  \leftskip0truemm\rightskip0truemm
In this paper, we introduce the notion of a  (generalized) right core inverse and
give  its characterizations and expressions.
Then, we  provide the relation schema of  (one-sided) core inverses,  (one-sided) pseudo core inverses and EP elements.
\\{  \textbf{Keywords:}} Core inverse; Right core inverse; Pseudo core inverse; Ring.
\\\noindent { \textbf{2010 Mathematics Subject Classification:}} 15A09; 16W10.
 \bigskip


\section{Introduction }
Throughout this paper, $\mathcal{R}$ is a ring with identity 1.
Recall that an element $a \in \mathcal{R}$ is (von Neumann)
regular if there exists $x \in \mathcal{R}$ satisfying $axa = a$.
Such $x$ is called an inner inverse of $a$ and denoted by $a^{-}$.
For $a\in \mathcal{R}$, if $xax=x$ holds for some $x\in \mathcal{R}\backslash\{0\}$,
then $x$ is an outer generalized inverse of $a$.
The Drazin inverse \cite{MPD3} of $a \in \mathcal{R}$ is the element $x \in \mathcal{R}$ which satisfies
\begin{center}
$xax = x$,\quad $ax = xa$,\quad $a^{k} = a^{k+1}x$\quad for some $k \geq 0$.
\end{center}
The element $x$ above is unique if it exists and is denoted by $a^{D}$.
The least such $k$ is called the index of $a$, and denoted by $\text{ind}(a)$.
In particular, when $\text{ind}(a)=1$, the Drazin inverse $a^{D}$ is called the group inverse of $a$ and it is denoted by $a^{\sharp}$.
The set of all Drazin (resp. group) invertible elements of $\mathcal{R}$ is denoted by $\mathcal{R}^{D}$ (resp. $\mathcal{R}^{\sharp}$).

In a ring $\mathcal{R}$, an involution $\ast: \mathcal{R} \rightarrow \mathcal{R}$
is an anti-isomorphism which satisfies $(a^{\ast})^{\ast} = a$, $(a + b)^{\ast} = a^{\ast} + b^{\ast}$ and
$(ab)^{\ast}= b^{\ast}a^{\ast}$ for all $a, b \in \mathcal{R}$. $\mathcal{R}$ is called a $\ast$-ring if $R$ is a ring with involution.
In what follows, $\mathcal{R}$ is a $\ast$-ring. An element $a \in \mathcal{R}$ is said to be Moore-Penrose invertible if the following equations:
\begin{center}
(1) $axa = a$, (2) $xax = x$, (3) $(ax)^{\ast} = ax$, (4) $(xa)^{\ast} = xa$
\end{center}
have a common solution \cite{R.P}. Such solution is unique if it exists, and is usually
denoted by $a^{\dag}$. The set of all Moore-Penrose invertible elements of $\mathcal{R}$ will be
denoted by $\mathcal{R}^{\dag}$.
If $x \in \mathcal{R}$ satisfies both Eqs. (1) and (3), then $x$ is called  an $\{1, 3\}$-inverse of $a$ and denoted by $a^{(1,3)}$.
The set of all $\{1, 3\}$-invertible elements of $\mathcal{R}$ is denoted by $\mathcal{R}^{\{1,3\}}$.
Similarly, if $x \in \mathcal{R}$ satisfies both Eqs. (1) and (4), then $x$ is called an $\{1, 4\}$-inverse of $a$ and denoted by $a^{(1,4)}$.
The set of all $\{1, 4\}$-invertible elements of $\mathcal{R}$ is denoted by $\mathcal{R}^{\{1,4\}}$.

In 2010, Baksalary and Trenkler \cite{OMB1} introduced the concept of core inverse for complex matrices.
Later, Raki\'{c} et al. \cite{DSR1} extended this concept to an arbitrary $\ast$-ring case.
The core (resp. dual core) inverse \cite{DSR1} of $a \in \mathcal{R}$ is the element $x \in \mathcal{R}$ which satisfies
\begin{center}
$axa = a$, $x\mathcal{R} = a\mathcal{R}$ (resp. $\mathcal{R}x = \mathcal{R}a$), $\mathcal{R}x= \mathcal{R}a^{\ast}$ (resp. $x\mathcal{R} = a^{\ast}\mathcal{R}$).
\end{center}
The element $x$ above is unique if it exists and is denoted by $a^{\tiny{\textcircled{\tiny\#}}}$(resp. $a_{\tiny{\textcircled{\tiny\#}}}$).
The set of all core (resp. dual core) invertible elements of $\mathcal{R}$ will be denoted
by $R^{\tiny{\textcircled{\tiny\#}}}$ (resp. $R_{\tiny{\textcircled{\tiny\#}}}$).
They also proved that the core inverse of $a$ is the unique element $a^{\tiny{\textcircled{\tiny\#}}}$ satisfying the following five equations
\begin{center}
$aa^{\tiny{\textcircled{\tiny\#}}}a = a$, $a^{\tiny{\textcircled{\tiny\#}}}aa^{\tiny{\textcircled{\tiny\#}}} = a^{\tiny{\textcircled{\tiny\#}}}$,
$(aa^{\tiny{\textcircled{\tiny\#}}})^{\ast} = aa^{\tiny{\textcircled{\tiny\#}}}$, $a^{\tiny{\textcircled{\tiny\#}}}a^{2} = a$,
$a(a^{\tiny{\textcircled{\tiny\#}}})^{2 }= a^{\tiny{\textcircled{\tiny\#}}}$.
\end{center}
Recently, Xu, Chen and Zhang \cite{SZX1} characterized the core invertible elements in $\mathcal{R}$ in terms of three equations.
The core inverse of $a$ is the unique solution to equations
\begin{center}
$xa^{2} = a$, $ax^{2} = x$ and $(ax)^{\ast} = ax^{\ast}$.
\end{center}
Further, they pointed out that $a \in \mathcal{R}^{\tiny{\textcircled{\tiny\#}}}$ if and only if $a \in \mathcal{R}^{\sharp}$ and $a \in \mathcal{R}^{\{1,3\}}$,
in which case, $a^{\tiny{\textcircled{\tiny\#}}} = a^{\sharp}aa^{(1,3)}$.
$p \in \mathcal{R}$ is a projection if  $p^{2} = p = p^{\ast}$.
In \cite{LC}, Li and Chen gave the characterizations and expressions of core inverse of an element by projections and units.
They proved that $a$ is core invertible if and only if there exists a projection $p$
such that $pa=0$, $a^{n}+p$ is invertible for $n \geq 1$.
In this article, we will consider the case that $a^{n}+p$ is one-sided invertible,
and then the concepts of (generalized) one-sided core inverses are introduced.
It is worth mentioning that in \cite{LC} the authors proved that
$a \in \mathcal{R}(a^{\ast})^{n}a$ is equivalent to  $a \in \mathcal{R}a^{\ast}a \cap a^{n}\mathcal{R}$ for $n \geq 1$.
In this article, we will prove that this condition is also a characterization of right core inverse of $a$.

Now, we give the main concepts  and symbols.
Drazin in \cite[Definition 1.3]{MPD1} introduced the following outer generalized inverse,
called it $(b,c)$-inverse:
\begin{definition}
Given any $a, b, c, y \in \mathcal{R}$, then $y$ is called a $(b, c)$-inverse of $a$ if
\begin{center}
$y\in (b\mathcal{R}y) \cap (y\mathcal{R}c)$, $yab = b$ and $cay = c$.
\end{center}
\end{definition}
Any such $y$ is unique if it exists, in which case also $yay=y$.
We will use $a^{||(b,c)}$ to denote the $(b,c)$-inverse of $a$.
It is well known that the Drazin inverse of $a$ is the $(a^{j}, a^{j})$-inverse of $a$ for
some $j \in \mathbb{N}$, in particular, the group inverse of $a$ is the $(a, a)$-inverse of $a$.
The Moore-Penrose inverse of $a$ is the $(a^{\ast}, a^{\ast})$-inverse of $a$.
The core inverse of $a$ is the $(a,a^{\ast})$-inverse of $a$,
which is a special case of $(b,c)$-inverse where in general $b\neq c$.
The dual core inverse of $a$ is the $(a^{\ast}, a)$-inverse of $a$.
In \cite[Theorem 2.2]{MPD1}, Drazin proved that, for any given $a,  b, c \in \mathcal{R}$, there exists the $(b, c)$-inverse $y$
of $a$ if and only if $\mathcal{R}b = \mathcal{R}cab$ and $c\mathcal{R} = cab\mathcal{R}$.
Later, in \cite{WCN}, the authors shown that if the $(b, c)$-inverse of $a$ exists,
then $b$, $c$ and $cab$ are regular.
Further, under the hypothesis of $cab$ regular, some characterizations
of the $(b, c)$-inverse are obtained.
In particular, It is proven that $(b, c)$-inverse,
hybrid $(b, c)$-inverse, and annihilator $(b, c)$-inverse are coincident when $cab$ is regular.
As weaker versions of the $(b,c)$-invertibility, one-sided $(b,c)$-invertibility is introduced by Drazin \cite{MPD2}:
\begin{definition}
Let $a, b, c\in \mathcal{R}$. Then $a$ is left $(b,c)$-invertible if $b\in \mathcal{R}cab$,
or equivalently if there exists $x\in \mathcal{R}c$ such that $xab=b$,
in which case any such $x$ will be called a left $(b,c)$-inverse of $a$.
\end{definition}
Dually, $a$ is called right $(b,c)$-invertible if $c\in cab\mathcal{R}$,
or equivalently if there exists $z\in b\mathcal{R}$ such that $caz=c$,
in which case any such $z$ will be called a right $(b,c)$-inverse of $a$.

In \cite{MPD2}, Drazin considers some properties of left (or right) $(b, c)$-inverses under the additional conditions,
such as $\mathcal{R}$ is strongly $\pi$-regular.
In \cite{WD}, the authors continue to study the properties of left (or right) $(b, c)$-inverses under the condition $cab$ is regular.
As applications, the authors introduced the one-sided core inverse, and provide a characterization of right core inverses \cite[Theorem 5.1]{WD}.
For the convenience of the reader, the definitions of right core inverses are given again.
\begin{definition}
Let $a\in \mathcal{R}$. We say that $a$ is right core invertible if $a$ is right $(a,a^*)$-invertible.
\end{definition}

Recall that $a$ is right core invertible if and only if $a^*\in a^*a^2\mathcal{R}$
if and only if there exists $x\in \mathcal{R}$ such that $x\in a\mathcal{R}$ and $a^*ax=a^*$.
The sets of all right core invertible elements of $\mathcal{R}$ will be denoted by $\mathcal{R}^{\tiny{\textcircled{\tiny\#}}}_{r}$.
The symbol $a^{\tiny{\textcircled{\tiny\#}}}_{r}$ is used to denote the right core inverse of $a$, when $a\in \mathcal{R}^{\tiny{\textcircled{\tiny\#}}}_{r}$.

Motivated by the results of \cite{LC,WD}, in Section 2 and Section 3,
we will continue to study the characterization of right core inverses of $a$.
In Section 4, we will provide the relation schema of several kinds of (generalized one-sided) core inverses.

\section{ Characterizing right core inverses by
projections and units in a $\ast$-ring}

In \cite[Theorem 3.3 and Theorem 3.4]{LC},
the authors gave the characterizations and expressions of core inverse of an element by a projection and units.
In this section, we present some equivalent conditions for the existence of right core inverses.
It is proven that $a$ is right core invertible if and only if there exists a projection $p$
such that $pa=0$, $a^{n}+p$ is right invertible for $n \geq 1$.
Before we start, look at the following results.

\begin{lemma} \label{lem21}
\cite[Theorem 5.1]{WD} Let $a\in \mathcal{R}$. Then the following statements are equivalent:

$(i)$ $a$ is right core invertible;

$(ii)$ there exists $x\in \mathcal{R}$ such that $axa=a$, $x=ax^{2}$ and $(ax)^{\ast}=ax$.
\end{lemma}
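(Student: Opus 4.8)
The plan is to work directly from the characterization recorded just before the statement, namely that $a$ is right core invertible if and only if there exists $z\in a\mathcal{R}$ with $a^*az=a^*$. I will show that any such $z$ already satisfies the three equations in $(ii)$, and conversely that any $x$ satisfying $(ii)$ witnesses right core invertibility; in each direction the witness $x$ and the witness $z$ will in fact be the same element.

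For $(i)\Rightarrow(ii)$, fix $z\in a\mathcal{R}$ with $a^*az=a^*$, and write $z=au$ for some $u\in\mathcal{R}$. First I would establish that $az$ is a projection by an adjoint manipulation: from $a^*az=a^*$ one gets $(az)^*(az)=z^*(a^*az)=z^*a^*=(az)^*$, and taking adjoints of this identity yields $(az)^*(az)=az$; comparing the two shows $az=(az)^*$ and hence $(az)^2=az$. This simultaneously delivers the third equation $(az)^*=az$. Next, taking the adjoint of the hypothesis gives $z^*a^*a=a$, so $aza=(az)a=z^*a^*a=a$, which is the first equation $aza=a$. Finally, the outer equation $z=az^2$ follows from the factorisation $z=au$: indeed $az^2=(az)(au)=(aza)u=au=z$, where $aza=a$ is used in the penultimate step. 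Thus $x=z$ satisfies all three equations of $(ii)$.

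For the converse $(ii)\Rightarrow(i)$, suppose $x$ satisfies $axa=a$, $x=ax^2$ and $(ax)^*=ax$. The equation $x=ax^2=a(x^2)$ shows $x\in a\mathcal{R}$, so it only remains to verify $a^*ax=a^*$. Using the Hermitian condition and then the inner-inverse condition, I would compute $a^*ax=a^*(ax)=a^*(ax)^*=a^*x^*a^*=(axa)^*=a^*$. Hence $x$ itself serves as the required witness $z$, and $a$ is right core invertible.

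The routine parts are the adjoint bookkeeping; the one genuinely delicate point is the proof of the outer equation $z=az^2$ in the forward direction, where it is essential to exploit the membership $z\in a\mathcal{R}$ (that is, the factorisation $z=au$) together with $aza=a$, rather than trying to derive it from the two-sided relations alone. Once the projection identity $az=(az)^*$ is in hand, every remaining step reduces to a short substitution.
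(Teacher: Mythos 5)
Your proof is correct. Note that the paper itself does not prove this lemma --- it is quoted verbatim from \cite[Theorem 5.1]{WD} --- so there is no internal argument to compare against; what you have supplied is a self-contained derivation from the definition of right $(a,a^*)$-invertibility recorded in the introduction (the existence of $z\in a\mathcal{R}$ with $a^*az=a^*$), which is exactly the right starting point. Each step checks out: the identity $(az)^*(az)=(az)^*$ together with the self-adjointness of $(az)^*(az)$ forces $(az)^*=az$ and $(az)^2=az$; the adjoint $z^*a^*a=a$ then gives $aza=a$; and the factorisation $z=au$ combined with $aza=a$ yields $az^2=(aza)u=au=z$. The converse computation $a^*ax=a^*(ax)^*=(axa)^*=a^*$ is likewise correct, and your observation that the witness can be taken to be the same element in both directions is consistent with Remark \ref{rem22}.
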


\begin{remark}\label{rem22}
Checking the proof of Lemma \ref{lem21} \cite[Theorem 5.1]{WD},
we can find that all right core inverses $a^{\tiny{\textcircled{\tiny\#}}}_{r}$ of $a$ satisfy
\begin{center}
$aa^{\tiny{\textcircled{\tiny\#}}}_{r}a=a$, $a^{\tiny{\textcircled{\tiny\#}}}_{r}=a(a^{\tiny{\textcircled{\tiny\#}}}_{r})^{2}$
and $(aa^{\tiny{\textcircled{\tiny\#}}}_{r})^{\ast}=aa^{\tiny{\textcircled{\tiny\#}}}_{r}$.
\end{center}
Moreover, if $a$ is right core invertible, then $aa^{\tiny{\textcircled{\tiny\#}}}_{r}$ is invariant on the choice of $a^{\tiny{\textcircled{\tiny\#}}}_{r}$.
Indeed, assume that $x_{1}$ and $x_{2}$ are two right core inverses of $a$. Then
$ax_{1}=(ax_{1})^{\ast}=x_{1}^{\ast}a^{\ast}=x_{1}^{\ast}a^{\ast}ax_{2}=ax_{2}$.
We say that $a^{\pi}=1-aa^{\tiny{\textcircled{\tiny\#}}}_{r}$ is the spectral idempotents of $a$ when $a$ is right core invertible in $\mathcal{R}$.
\end{remark}

An element $a \in \mathcal{R}$ is called EP if $a \in \mathcal{R}^{\dag}$ and $aa^{\dag} = a^{\dag}a$.
It is well known that $a$ is EP if and only if $a\in \mathcal{R}^{\dag} \cap \mathcal{R}^{\sharp}$ and $a^{\sharp}=a^{\dag}$.
In \cite{SZX1}, the authors pointed out that $a \in \mathcal{R}^{\tiny{\textcircled{\tiny\#}}}$
if and only if $a \in \mathcal{R}^{\sharp} \cap \mathcal{R}^{\{1,3\}}$. By Lemma \ref{lem21} or Remark \ref{rem22}, we deduce the following:

\begin{corollary}
Let $a\in \mathcal{R}$ be right core invertible.
If $aa^{\tiny\textcircled{\tiny\#}}_r=a^{\tiny\textcircled{\tiny\#}}_ra$,
then $a$ is EP and $a^{\tiny\textcircled{\tiny\#}}_r=a^\#=a^\dag$.
\end{corollary}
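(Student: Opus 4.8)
The plan is to set $x = a^{\tiny{\textcircled{\tiny\#}}}_{r}$ and prove that this one element is simultaneously the group inverse and the Moore-Penrose inverse of $a$; EP-ness and the asserted equalities then fall out at once. By Remark \ref{rem22} the element $x$ already satisfies $axa = a$, $x = ax^{2}$ and $(ax)^{\ast} = ax$, and by hypothesis we have the additional relation $ax = xa$.

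First I would upgrade $x$ to a group inverse of $a$. Two of the three defining equations of the group inverse, namely $axa = a$ and $ax = xa$, are already available, so only $xax = x$ must be checked. This is obtained by reading $x = ax^{2}$ as $x = (ax)x$ and substituting $ax = xa$, giving $x = (xa)x = xax$. Hence $a \in \mathcal{R}^{\sharp}$ with $a^{\sharp} = x$. For the Moore-Penrose inverse I would verify the four defining equations (1)--(4): equations (1), (2), (3) are precisely $axa = a$, $xax = x$, $(ax)^{\ast} = ax$, all now in hand, while equation (4) follows since $(xa)^{\ast} = (ax)^{\ast} = ax = xa$, using the commuting hypothesis together with the symmetry of $ax$. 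Thus $a \in \mathcal{R}^{\dag}$ with $a^{\dag} = x$.

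Combining the two steps gives $a^{\sharp} = x = a^{\dag}$, and moreover $aa^{\dag} = ax = xa = a^{\dag}a$, so $a$ is EP and $a^{\tiny{\textcircled{\tiny\#}}}_{r} = a^{\sharp} = a^{\dag}$. I do not expect any genuine obstacle here: the whole argument is a short chain of substitutions, the only mildly delicate point being the derivation of $xax = x$ from $x = ax^{2}$ by inserting the commuting hypothesis, after which the fourth Moore-Penrose equation is immediate and the identification of all three inverses is automatic.
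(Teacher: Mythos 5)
Your proposal is correct and is essentially the argument the paper intends: the corollary is stated without a written proof, only the remark ``By Lemma \ref{lem21} or Remark \ref{rem22}, we deduce the following,'' and your chain of substitutions (deriving $xax=x$ from $x=ax^2$ via $ax=xa$, then checking the group-inverse and all four Moore--Penrose equations) is exactly the intended deduction from those facts.
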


Under the hypothesis $x=ax^{2}$, which implies $x=a^kx^{k+1}$, for any $k\geq 1$,
we deduce that $axa=a$ and $(ax)^{\ast}=ax$ if and only if $a^{k+1}x^{k+1}a=a$ and $(a^{k+1}x^{k+1})^{\ast}=a^{k+1}x^{k+1}$.
Thus, we obtain new characterization of right core invertible elements.

\begin{corollary}
Let $a\in \mathcal{R}$. Then the following statements are equivalent:

$(i)$ $a$ is right core invertible;

$(ii)$ there exists $x\in \mathcal{R}$ such that $a^{k+1}x^{k+1}a=a$, $x=ax^{2}$ and $(a^{k+1}x^{k+1})^{\ast}=a^{k+1}x^{k+1}$, for any $k\geq 1$;

$(iii)$ there exists $x\in \mathcal{R}$ such that $a^{k+1}x^{k+1}a=a$, $x=ax^{2}$ and $(a^{k+1}x^{k+1})^{\ast}=a^{k+1}x^{k+1}$, for some $k\geq 1$.
\end{corollary}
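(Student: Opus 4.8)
The plan is to reduce all of the $k$-indexed conditions appearing in (ii) and (iii) back to the three defining equations of Lemma \ref{lem21}, exploiting the single hypothesis $x=ax^{2}$, which is common to all three statements. The entire argument hinges on one algebraic simplification, so I would isolate it first as a preliminary computation and then read off the equivalences.

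First I would prove by induction on $k$ that the relation $x=ax^{2}$ forces
\[
a^{k}x^{k+1}=x \quad\text{for all } k\geq 1.
\]
The base case $k=1$ is exactly the hypothesis. For the inductive step, assuming $a^{k}x^{k+1}=x$, I compute $a^{k+1}x^{k+2}=a(a^{k}x^{k+1})x=axx=ax^{2}=x$, which closes the induction. Multiplying this identity on the left by $a$ then yields the key collapse
\[
a^{k+1}x^{k+1}=a(a^{k}x^{k+1})=ax \quad\text{for all } k\geq 1.
\]
Thus, whenever $x=ax^{2}$ holds, the element $a^{k+1}x^{k+1}$ is literally equal to $ax$, independently of $k$.

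With this in hand the equivalences become essentially automatic. For $(i)\Rightarrow(ii)$: by Lemma \ref{lem21} there is an $x$ with $axa=a$, $x=ax^{2}$ and $(ax)^{\ast}=ax$; since $a^{k+1}x^{k+1}=ax$ for every $k\geq 1$, the three conditions of (ii) hold for this same $x$ and all $k$. The implication $(ii)\Rightarrow(iii)$ is trivial. For $(iii)\Rightarrow(i)$: given $x$ satisfying (iii) for some fixed $k$, the relation $x=ax^{2}$ again gives $a^{k+1}x^{k+1}=ax$, so the conditions $a^{k+1}x^{k+1}a=a$ and $(a^{k+1}x^{k+1})^{\ast}=a^{k+1}x^{k+1}$ read exactly as $axa=a$ and $(ax)^{\ast}=ax$; together with $x=ax^{2}$ these are precisely the hypotheses of Lemma \ref{lem21}, whence $a$ is right core invertible.

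There is no genuine obstacle here: the only step requiring any care is the inductive identity $a^{k}x^{k+1}=x$, and the substance of the proof is the observation that this identity renders $a^{k+1}x^{k+1}$ independent of $k$ and equal to $ax$. The one point I would state cleanly is that the hypothesis $x=ax^{2}$ is shared by all three statements, so it may be used freely to perform the reduction in every direction of the argument.
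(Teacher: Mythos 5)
Your proof is correct and follows essentially the same route as the paper: the paper likewise observes that $x=ax^{2}$ forces $x=a^{k}x^{k+1}$, hence $a^{k+1}x^{k+1}=ax$, so the $k$-indexed conditions collapse to the three equations of Lemma \ref{lem21}. Your write-up merely makes the induction explicit.
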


In the following, we will use the symbol $\mathcal{R}^{-1}_{r}$ to denote the set of all right invertible elements in $\mathcal{R}$.
The symbol $a^{-1}_{r}$ denotes the right inverse of $a$, when $a\in \mathcal{R}^{-1}_{r}$.
The symbol $r(a)$ (rep. $l(a)$)  denotes the right (rep. left) annihilator of $a\in \mathcal{R}$.

\begin{theorem}\label{the25}
Let  $a\in \mathcal{R}$. Then the following statements are equivalent:

$(i)$ $a$ is right core invertible;

$(ii)$ there exists a unique projection $p$ such that $pa=0$ and $u=p+a\in \mathcal{R}^{-1}_{r}$;

$(iii)$ there exists a unique projection $p$ such that $pa=0$ and $w=a(1-p)+p\in \mathcal{R}^{-1}_{r}$.

In this case, $a^{\tiny{\textcircled{\tiny\#}}}_{r}=u^{-1}_{r}(1-p)=(1-p)w^{-1}_{r}.$
\end{theorem}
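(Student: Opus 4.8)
The plan is to funnel every implication through the three-equation description of Lemma~\ref{lem21}: an element $x$ is a right core inverse of $a$ precisely when $axa=a$, $x=ax^{2}$ and $(ax)^{\ast}=ax$. The single object tying $(ii)$ and $(iii)$ to $(i)$ is the projection attached to a right core inverse. Indeed, if the three equations hold then $(ax)^{2}=(axa)x=ax$, so together with $(ax)^{\ast}=ax$ the element $ax$ is a projection; hence $p:=1-ax$ is a projection with $pa=(1-ax)a=a-axa=0$, and one records at once the two auxiliary identities $px=(1-ax)x=x-ax^{2}=0$ and $(ap)^{2}=a(pa)p=0$. Throughout I will use $1-p=ax$ as the bridge.

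For the reverse implications I would start from a right inverse and read off its action on the two corners. Take $(ii)\Rightarrow(i)$: given the projection $p$ with $pa=0$ and a right inverse $v=u^{-1}_{r}$ of $u=a+p$, multiplying $uv=1$ on the left by $p$ and by $1-p$ and using $pa=0$ gives $pv=p$ and $av=1-p$ for \emph{every} right inverse $v$. Setting $x:=v(1-p)$ then yields $ax=av(1-p)=(1-p)^{2}=1-p$, so $(ax)^{\ast}=ax$ and $axa=(1-p)a=a$ are immediate, while $ax^{2}=(1-p)x=(1-p)v(1-p)=v(1-p)=x$ follows from $(1-p)v=v-p$. Thus $x$ is a right core inverse and $a^{\tiny{\textcircled{\tiny\#}}}_{r}=u^{-1}_{r}(1-p)$. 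The implication $(iii)\Rightarrow(i)$ is the mirror image: from $wv=1$ with $w=a(1-p)+p$ one extracts $pv=p$ and $a(1-p)v=1-p$, and $x:=(1-p)v=(1-p)w^{-1}_{r}$ works even more directly, since now $ax^{2}=(1-p)(1-p)v=(1-p)v=x$.

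For the forward implications I would exhibit explicit right inverses built from a right core inverse $x$, with $p=1-ax$ as above. The case $(iii)$ is clean: using $px=0$ and $(1-p)x=x$ one computes $wx=a(1-p)x=ax=1-p$ and $wp=p$, so $w(x+p)=1$ and $x+p$ is a right inverse of $w$. The case $(ii)$ is the one place where a different idea is needed: the same candidate gives only $u(x+p)=ux+up=(1-p)+(ap+p)=1+ap$, not $1$. Here I would invoke the nilpotency $(ap)^{2}=0$ (a consequence of $pa=0$) to see that $1+ap$ is a unit with inverse $1-ap$, so $(x+p)(1-ap)$ is a right inverse of $u$ and $u\in\mathcal{R}^{-1}_{r}$. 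This correction step is the main obstacle: unlike $w$, the element $u=a+p$ does not invert the obvious candidate on the nose.

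Finally, uniqueness of $p$ in both $(ii)$ and $(iii)$ is a direct consequence of Remark~\ref{rem22}. Any admissible projection $p$ produces, through the reverse construction, a right core inverse $x$ with $ax=1-p$; since $aa^{\tiny{\textcircled{\tiny\#}}}_{r}$ does not depend on the chosen right core inverse, the value $1-p$ is forced, and hence $p$ is unique. I would close by noting the one subtlety to keep straight: right inverses need not be unique, so the stated formula $a^{\tiny{\textcircled{\tiny\#}}}_{r}=u^{-1}_{r}(1-p)=(1-p)w^{-1}_{r}$ is to be read as delivering \emph{a} right core inverse for any chosen right inverse --- which is legitimate precisely because the relations $pv=p$, $av=1-p$ (respectively $pv=p$, $a(1-p)v=1-p$) hold for every right inverse $v$.
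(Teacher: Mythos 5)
Your proof is correct, and its skeleton coincides with the paper's: both funnel everything through Lemma \ref{lem21}, set $p=1-ax$ for the forward directions, and recover a right core inverse of the form $u^{-1}_{r}(1-p)$, resp.\ $(1-p)w^{-1}_{r}$, for the reverse directions (your computations for $(ii)\Rightarrow(i)$ and $(iii)\Rightarrow(i)$ are essentially verbatim the paper's, and your candidate $x+p$ for $(i)\Rightarrow(iii)$ is exactly the paper's $v=a^{\tiny{\textcircled{\tiny\#}}}_{r}+1-aa^{\tiny{\textcircled{\tiny\#}}}_{r}$). You deviate in two local spots. First, for $(i)\Rightarrow(ii)$ the paper exhibits the direct right inverse $a^{\tiny{\textcircled{\tiny\#}}}_{r}+1-a^{\tiny{\textcircled{\tiny\#}}}_{r}a$ of $p+a$, which verifies in one line since $px=0$ and $pxa=0$; your route via $u(x+p)=1+ap$ and the nilpotency $(ap)^{2}=0$ is also valid but costs an extra correction factor $1-ap$ --- the two right inverses are genuinely different elements, which is harmless since right inverses need not be unique. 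Second, and more substantively, your uniqueness argument differs: the paper shows $l(1-p)=l(1-q)=l(a)$ for two admissible projections and deduces $p=pq$, $q=qp$, hence $p=(pq)^{\ast}=qp=q$; you instead route uniqueness through the already-proved reverse implication plus the invariance of $aa^{\tiny{\textcircled{\tiny\#}}}_{r}$ from Remark \ref{rem22}, forcing $1-p=ax$ to be the same for every admissible $p$. Your version is shorter and handles $(ii)$ and $(iii)$ uniformly, at the price of leaning on the Remark; the paper's annihilator computation is self-contained. Your closing observation that the relations $pv=p$ and $av=1-p$ hold for \emph{every} right inverse $v$, so that the displayed formula is well posed for any choice of $u^{-1}_{r}$, is a point the paper leaves implicit and is worth keeping.
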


\begin{proof}
$(i) \Rightarrow (ii)$ Set $p=1-aa^{\tiny{\textcircled{\tiny\#}}}_{r}$.
From Remark \ref{rem22},  we know that $p$ is a projection and $pa=0$.
Note that $pa^{\tiny{\textcircled{\tiny\#}}}_{r}=pa(a^{\tiny{\textcircled{\tiny\#}}}_{r})^{2}=0$, then
\begin{center}
$(p+a)(a^{\tiny{\textcircled{\tiny\#}}}_{r}+1-a^{\tiny{\textcircled{\tiny\#}}}_{r}a)=p+aa^{\tiny{\textcircled{\tiny\#}}}_{r}=1$.
\end{center}
This means $p+a$ is right invertible in $\mathcal{R}$.

For the uniqueness of the projection, we can assume that there exist two projections $p$ and $q$ satisfy
$pa=qa=0$, $p+a$ and $q+a$ are right invertible in $\mathcal{R}$. Then it is easy to get $l(1-p)=l(1-q)=l(a)$. Indeed,
$(1-p)a=a$ gives that $l(1-p)\subseteq l(a)$. And $(1-p)(p+a)=a$, it gives that $1-p=a(p+a)^{-1}_{r}$ and $l(a) \subseteq l(1-p)$.
Similarly, we can get $l(1-q)=l(a)$. Then, $p\in l(1-p)=l(1-q)$ and $q\in l(1-q)=l(1-p)$, this implies that $p=pq$ and $q=qp$.
Hence, $p=pq=(pq)^{\ast}=qp=q$.

$(ii) \Rightarrow (i)$ By hypothesis $pa=0$ and $p+a\in \mathcal{R}^{-1}_{r}$, we have $(1-p)(p+a)=a$ and $1-p=a(p+a)^{-1}_{r}$. Set $x=(p+a)^{-1}_{r}(1-p)$. Then we obtain
\begin{center}
$ax=a(p+a)^{-1}_{r}(1-p)=1-p$,
\end{center}
it gives that $(ax)^{\ast}=ax$. It is easy to get $axa=(1-p)a=a$.
Note that $p(p+a)=p$, then $p=p(p+a)^{-1}_{r}$ and $(1-p)(p+a)^{-1}_{r}=(p+a)^{-1}_{r}-p$.
Hence,
\begin{center}
$ax^{2}=(1-p)(p+a)^{-1}_{r}(1-p)=[(p+a)^{-1}_{r}-p](1-p)=(p+a)^{-1}_{r}(1-p)=x$.
\end{center}
By Lemma \ref{lem21}, we obtain $a$ is right core invertible.

$(i) \Rightarrow (iii)$ We also set $p=1-aa^{\tiny{\textcircled{\tiny\#}}}_{r}$ and $v=a^{\tiny{\textcircled{\tiny\#}}}_{r}+1-aa^{\tiny{\textcircled{\tiny\#}}}_{r}$.
Then $pa=0$ and
\begin{eqnarray*}
[a(1-p)+p]v&=&a^{2}a^{\tiny{\textcircled{\tiny\#}}}_{r}(a^{\tiny{\textcircled{\tiny\#}}}_{r}+1-aa^{\tiny{\textcircled{\tiny\#}}}_{r})+1-aa^{\tiny{\textcircled{\tiny\#}}}_{r}\\
&=&a^{2}(a^{\tiny{\textcircled{\tiny\#}}}_{r})^{2}+a^{2}a^{\tiny{\textcircled{\tiny\#}}}_{r}
-a^{2}a^{\tiny{\textcircled{\tiny\#}}}_{r}aa^{\tiny{\textcircled{\tiny\#}}}_{r}+1-aa^{\tiny{\textcircled{\tiny\#}}}_{r}\\
&=&aa^{\tiny{\textcircled{\tiny\#}}}_{r}+a^{2}a^{\tiny{\textcircled{\tiny\#}}}_{r}-a^{2}a^{\tiny{\textcircled{\tiny\#}}}_{r}+1-aa^{\tiny{\textcircled{\tiny\#}}}_{r}\\
&=&1
\end{eqnarray*}
For the uniqueness of the projection, it is similar to $(i) \Rightarrow (ii)$.

$(iii) \Rightarrow (i)$ Note that $(1-p)w=(1-p)[a(1-p)+p]=(1-p)a(1-p)=a(1-p)$. Then $1-p=a(1-p)w^{-1}_{r}$.
We set $x=(1-p)w^{-1}_{r}$. It is clear that $ax=1-p=(ax)^{\ast}$. And $axa=(1-p)a=a$ and
$ax^{2}=(1-p)x=x$. Thus, by Lemma \ref{lem21}, we obtain $a$ is right core invertible.
\end{proof}

The following theorem shows that $p+a^{n}\in \mathcal{R}^{-1}_{r}$ in Theorem \ref{the25} is true when taking $n \geq 2$.
Before it, we state one auxiliary result.

\begin{lemma}\cite[Exercise 1.6]{L2001}\label{lem26}
Let $a, b \in \mathcal{R}$. Then $1+ab$ is right invertible if and only if $1+ba$ is right invertible.
\end{lemma}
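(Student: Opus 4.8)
The plan is to exploit the symmetry of the statement: interchanging the roles of $a$ and $b$ carries $1+ab$ to $1+ba$, so it suffices to prove one implication, say that right invertibility of $1+ab$ forces right invertibility of $1+ba$; the converse then follows verbatim after swapping $a$ and $b$. Accordingly, I would start by assuming there exists $u\in\mathcal{R}$ with $(1+ab)u=1$, and aim to produce an explicit right inverse of $1+ba$.

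The candidate is suggested by the formal geometric series $(1+ab)^{-1}=1-ab+abab-\cdots$; conjugating gives $(1+ba)^{-1}=1-ba+baba-\cdots=1-b(1-ab+abab-\cdots)a$, which motivates the guess $v=1-bua$. The single algebraic input I need comes from the hypothesis itself: expanding $(1+ab)u=1$ yields $abu=1-u$, and right-multiplying by $a$ gives the key identity $abua=a-ua$.

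With this identity in hand the verification is a one-line cancellation. Expanding, $(1+ba)(1-bua)=1-bua+ba-b(abua)$, and substituting $abua=a-ua$ turns the last term into $b(a-ua)=ba-bua$, so the four summands collapse to $1$. Hence $v=1-bua$ is a right inverse of $1+ba$, completing the nontrivial direction. I do not expect a genuine obstacle here: once the formula for $v$ is identified — and it is essentially forced by the geometric-series heuristic — the computation is entirely routine, and the symmetry argument disposes of the reverse implication without further work.
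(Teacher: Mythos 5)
Your proof is correct. The paper gives no argument of its own for this lemma --- it simply cites \cite[Exercise 1.6]{L2001} --- and your computation (from $(1+ab)u=1$ deduce $abu=1-u$, then verify $(1+ba)(1-bua)=1$, with the converse by the $a\leftrightarrow b$ symmetry) is exactly the standard one-sided Jacobson-lemma argument that the cited exercise intends.
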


\begin{theorem}\label{the27}
Let $a\in \mathcal{R}$ and $n \geq 2$. Then the following statements are equivalent:

$(i)$ $a$ is right core invertible;

$(ii)$ there exists a unique projection $p$ such that $pa=0$ and $u=p+a^{n}\in \mathcal{R}^{-1}_{r}$;

$(iii)$ there exists a unique projection $p$ such that $pa=0$ and $w=a^{n}(1-p)+p\in \mathcal{R}^{-1}_{r}$.
\end{theorem}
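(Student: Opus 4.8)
The plan is to handle $(i)\Leftrightarrow(ii)$ by imitating the proof of Theorem \ref{the25}, and then to read off $(ii)\Leftrightarrow(iii)$ almost for free from Lemma \ref{lem26}. Throughout I would write $x=a^{\tiny{\textcircled{\tiny\#}}}_{r}$ and $e=ax$. Remark \ref{rem22}, together with $x=ax^{2}$, gives $e^{2}=e=e^{\ast}$, $ea=axa=a$ and $ex=ax^{2}=x$; setting $p=1-e$ this yields $pa=0$ and $px=0$. Since $ea=a$ we get $ea^{k}=a^{k}$ for all $k\geq 1$, and a one-line induction using $ex=x$ gives $a^{k}x^{k}=e$; in particular $pa^{n}=0$ and $a^{n}x^{n}=1-p$. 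These are the only algebraic facts the argument needs.

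For $(i)\Rightarrow(ii)$ I take $p=1-e$. Then $pa=0$, and I claim that $x^{n}+p-x^{n}a^{n}p$ is a right inverse of $u=p+a^{n}$: expanding the product and using $px^{n}=0$, $a^{n}x^{n}=1-p$ and $ea^{n}=a^{n}$ collapses it to $e+p=1$. For uniqueness, suppose $p,q$ are projections with $pa=qa=0$ and $p+a^{n},q+a^{n}\in\mathcal{R}^{-1}_{r}$. From $(1-p)(p+a^{n})=a^{n}$ I obtain $1-p=a^{n}(p+a^{n})^{-1}_{r}$, which together with $(1-p)a^{n}=a^{n}$ forces $l(1-p)=l(a^{n})$; the same holds for $q$, so $l(1-p)=l(1-q)$, whence $p=pq$ and $q=qp$, and self-adjointness gives $p=q$. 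For $(ii)\Rightarrow(i)$, let $t=u^{-1}_{r}$; from $(1-p)(p+a^{n})=a^{n}$ one gets $a^{n}t=1-p$, and I set $x=a^{n-1}t(1-p)$. Then $ax=a^{n}t(1-p)=1-p$ is a projection, $axa=(1-p)a=a$, and $ax^{2}=(1-p)a^{n-1}t(1-p)=x$, the last equality using $(1-p)a^{n-1}=a^{n-1}$, i.e. $pa^{n-1}=0$; Lemma \ref{lem21} then delivers $(i)$.

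Finally, $(ii)\Leftrightarrow(iii)$ comes directly from Lemma \ref{lem26}. Since $pa=0$ gives $(1-p)a^{n}=a^{n}$, one verifies the factorizations $w=a^{n}(1-p)+p=1+(a^{n}-1)(1-p)$ and $u=p+a^{n}=1+(1-p)(a^{n}-1)$; with $A=a^{n}-1$ and $B=1-p$ these read $w=1+AB$ and $u=1+BA$, so Lemma \ref{lem26} gives $w\in\mathcal{R}^{-1}_{r}$ if and only if $u\in\mathcal{R}^{-1}_{r}$, with the same projection $p$, so uniqueness transfers. The main obstacle is the converse direction: one must guess the right expression $x=a^{n-1}t(1-p)$ for the right core inverse and check $x=ax^{2}$, and it is exactly here that the hypothesis $n\geq 2$ is used, through $pa^{n-1}=0$. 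Spotting the Jacobson-type factorization $u=1+BA$, $w=1+AB$ that lets Lemma \ref{lem26} settle $(ii)\Leftrightarrow(iii)$ in one stroke is the other point needing care, though once found the verification is routine.
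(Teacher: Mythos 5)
Your proof is correct, and it deviates from the paper's in two substantive ways. First, for $(i)\Rightarrow(ii)$ the paper does not exhibit a right inverse of $p+a^{n}$ directly: it proves the case $n=2$ by combining Theorem \ref{the25} with Lemma \ref{lem26} (passing from $1+aa^{\tiny{\textcircled{\tiny\#}}}_{r}(a-1)$ to $1+(a-1)aa^{\tiny{\textcircled{\tiny\#}}}_{r}$) and then runs an induction on $n$ via the factorization $a^{n}+1-aa^{\tiny{\textcircled{\tiny\#}}}_{r}=(a^{2}a^{\tiny{\textcircled{\tiny\#}}}_{r}+1-aa^{\tiny{\textcircled{\tiny\#}}}_{r})(a^{n-1}+1-aa^{\tiny{\textcircled{\tiny\#}}}_{r})$; your explicit right inverse $x^{n}+p-x^{n}a^{n}p$ replaces all of this with a one-line verification (and indeed collapses to $1$ using only $px=0$, $a^{n}x^{n}=1-p$ and $pa^{n}=0$), which is arguably cleaner. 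Second, the paper proves $(i)\Rightarrow(iii)$ and $(iii)\Rightarrow(i)$ separately, using Lemma \ref{lem26} only with the specific idempotent $aa^{\tiny{\textcircled{\tiny\#}}}_{r}$, whereas you prove $(ii)\Leftrightarrow(iii)$ directly for an \emph{arbitrary} projection $p$ with $pa=0$, via $u=1+(1-p)(a^{n}-1)$ and $w=1+(a^{n}-1)(1-p)$; this has the additional merit of showing that the two conditions are satisfied by exactly the same set of projections, so the uniqueness clauses transfer automatically rather than having to be re-argued. Your $(ii)\Rightarrow(i)$ is essentially the paper's (it takes $x=a^{n-1}(p+a^{n})^{-1}_{r}$ without your harmless extra factor $(1-p)$), and your uniqueness argument via $l(1-p)=l(a^{n})$ is the correct adaptation of the one the paper only sketches by reference to Theorem \ref{the25}. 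Both approaches rely on the same two pillars, Lemma \ref{lem21} and Lemma \ref{lem26}; yours trades the induction for an explicit formula and a small amount of extra bookkeeping.
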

\begin{proof}
$(i) \Rightarrow (ii)$ Set $p=1-aa^{\tiny{\textcircled{\tiny\#}}}_{r}$.
We observe first that $p$ is a projection satisfying $pa=0$.
From Theorem \ref{the25}, we get $a+1-aa^{\tiny{\textcircled{\tiny\#}}}_{r}=1+aa^{\tiny{\textcircled{\tiny\#}}}_{r}(a-1)$ is right invertible.
By Lemma \ref{lem26}, we have
$1+(a-1)aa^{\tiny{\textcircled{\tiny\#}}}_{r}=1+a^{2}a^{\tiny{\textcircled{\tiny\#}}}_{r}-aa^{\tiny{\textcircled{\tiny\#}}}_{r}$ is right invertible.
When $n=2$, it is easy to verify that
\begin{center}
$a^{2}+1-aa^{\tiny{\textcircled{\tiny\#}}}_{r}=(a^{2}a^{\tiny{\textcircled{\tiny\#}}}_{r}+1-aa^{\tiny{\textcircled{\tiny\#}}}_{r})(a+1-aa^{\tiny{\textcircled{\tiny\#}}}_{r})$.
\end{center}
is right invertible.
We assume that $n > 2$ and the result is true for the case $n-1$.
Hence, $a^{n}+1-aa^{\tiny{\textcircled{\tiny\#}}}_{r}
=(a^{2}a^{\tiny{\textcircled{\tiny\#}}}_{r} + 1-aa^{\tiny{\textcircled{\tiny\#}}}_{r})(a^{n-1}+1-aa^{\tiny{\textcircled{\tiny\#}}}_{r})$ is right invertible.
For the uniqueness of the projection, it is similar to $(i) \Rightarrow (ii)$ in Theorem \ref{the25}.

$(ii) \Rightarrow (i)$ By hypothesis $pa=0$ and $p+a^{n}\in \mathcal{R}^{-1}_{r}$, we have $(1-p)(p+a^{n})=a^{n}$ and $1-p=a^{n}(p+a^{n})^{-1}_{r}$.
Set $x=a^{n-1}(p+a^{n})^{-1}_{r}$. Then we obtain
\begin{center}
$ax=a^{n}(p+a^{n})^{-1}_{r}=1-p$,
\end{center}
it gives that $(ax)^{\ast}=ax$. It is easy to get $axa=(1-p)a=a$.
Moreover, $ax^{2}=(1-p)a^{n-1}(p+a^{n})^{-1}_{r}=a^{n-1}(p+a^{n})^{-1}_{r}=x$.
By Lemma \ref{lem21}, we obtain $a$ is right core invertible.

$(i) \Rightarrow (iii)$ Set $p=1-aa^{\tiny{\textcircled{\tiny\#}}}_{r}$.
It is clear $p$ is a projection satisfying $pa=0$.
Note that $1+aa^{\tiny{\textcircled{\tiny\#}}}_{r}(a^{n}-1)=1+a^{n}-aa^{\tiny{\textcircled{\tiny\#}}}_{r}$ is right invertible by $(i) \Rightarrow (ii)$.
Hence, $w=a^{n}aa^{\tiny{\textcircled{\tiny\#}}}_{r}+1-aa^{\tiny{\textcircled{\tiny\#}}}_{r}=1+(a^{n}-1)aa^{\tiny{\textcircled{\tiny\#}}}_{r}$
is right invertible by Lemma \ref{lem26}.

$(iii) \Rightarrow (i)$ Note that $(1-p)w=(1-p)[a^{n}(1-p)+p]=a^{n}(1-p)$. Then $1-p=a^{n}(1-p)w^{-1}_{r}$.
We set $x=a^{n-1}(1-p)w^{-1}_{r}$. It is clear that $ax=1-p=(ax)^{\ast}$. And $axa=(1-p)a=a$ and
$ax^{2}=(1-p)x=x$. Thus, by Lemma \ref{lem21}, we obtain $a$ is right core invertible.
\end{proof}

By Remark \ref{rem22}, we know that when $a$ is right  core invertible,
$a^{\pi}=1-aa^{\tiny{\textcircled{\tiny\#}}}_{r}$ is the spectral idempotents of $a$.
It is of interest to know whether two elements in the ring
have equal idempotents determined by right core inverses. In the following result, some characterizations of those elements with
equal corresponding idempotents are given.

\begin{proposition} \label{pro28}
Let $a, b\in \mathcal{R}^{\tiny{\textcircled{\tiny\#}}}_{r}$. Then the following are equivalent:

$(i)$ $aa^{\tiny{\textcircled{\tiny\#}}}_{r} = bb^{\tiny{\textcircled{\tiny\#}}}_{r}$;

$(ii)$ $a\mathcal{R}=b\mathcal{R}$;

$(iii)$ $a^{\pi}b = 0$ and $a^{\pi}+ b \in \mathcal{R}^{-1}_{r}$;

$(iv)$ $a^{\pi}b = 0$ and $a^{\pi} + b(1 - a^{\pi} ) \in \mathcal{R}^{-1}_{r}$.

In addition, if one of statements (i)--(iv) holds,
then $ab$ is right core invertible and $b^{\tiny\textcircled{\tiny\#}}_ra^{\tiny\textcircled{\tiny\#}}_r$ is a right core inverse of $ab$.
\end{proposition}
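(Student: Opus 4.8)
The plan is to make statement $(i)$ the hub: since $a^{\pi} = 1 - aa^{\tiny{\textcircled{\tiny\#}}}_{r}$, statement $(i)$ is literally the assertion $a^{\pi} = b^{\pi}$, and I would prove $(i)\Leftrightarrow(ii)$, $(i)\Leftrightarrow(iii)$ and $(i)\Leftrightarrow(iv)$ separately. Before anything else I would record two facts valid for any right core invertible element with right core inverse $x := a^{\tiny{\textcircled{\tiny\#}}}_{r}$. From $axa = a$ one computes $(ax)^{2} = (axa)x = ax$, so together with $(ax)^{\ast} = ax$ from Remark \ref{rem22} the element $e := aa^{\tiny{\textcircled{\tiny\#}}}_{r}$ is a projection; and from $x = ax^{2}$ one gets $ex = (ax)x = ax^{2} = x$. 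Combining $a = axa = ea$ with $e = ax \in a\mathcal{R}$ yields $a\mathcal{R} = aa^{\tiny{\textcircled{\tiny\#}}}_{r}\mathcal{R} = e\mathcal{R}$, and likewise $b\mathcal{R} = f\mathcal{R}$ with $f := bb^{\tiny{\textcircled{\tiny\#}}}_{r}$.

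For $(i)\Leftrightarrow(ii)$ I would use these identifications of principal ideals. If $e = f$ then $a\mathcal{R} = e\mathcal{R} = f\mathcal{R} = b\mathcal{R}$. Conversely $e\mathcal{R} = f\mathcal{R}$ gives $fe = e$ and $ef = f$; applying the involution to $fe = e$ and using that $e, f$ are self-adjoint idempotents turns this into $ef = e$, so $e = f$, which is $(i)$. The equivalences $(i)\Leftrightarrow(iii)$ and $(i)\Leftrightarrow(iv)$ I would extract directly from Theorem \ref{the25} applied to the right core invertible element $b$: its unique associated projection is $b^{\pi}$, satisfying $b^{\pi}b = 0$ together with $b^{\pi}+b \in \mathcal{R}^{-1}_{r}$ (for $(iii)$) and $b^{\pi}+b(1-b^{\pi}) \in \mathcal{R}^{-1}_{r}$ (for $(iv)$). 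Hence $(i)$, read as $a^{\pi} = b^{\pi}$, instantly gives $(iii)$ and $(iv)$. For the converse, under $(iii)$ (resp. $(iv)$) the projection $a^{\pi}$ satisfies exactly the hypotheses that Theorem \ref{the25} pins down uniquely for $b$, so its uniqueness clause forces $a^{\pi} = b^{\pi}$. The one point to watch is to invoke Theorem \ref{the25} with $b$ — not $a$ — as the ambient element, and to lean on its uniqueness assertion rather than merely existence.

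For the final assertion I would assume $(i)$ and write $x = a^{\tiny{\textcircled{\tiny\#}}}_{r}$, $y = b^{\tiny{\textcircled{\tiny\#}}}_{r}$, $e = ax = by$. The key step, and the part I expect to be the real content, is the collapse $(ab)(yx) = a(by)x = aex = a(ex) = ax = e$, using $ex = x$; this shows $(ab)(yx)$ equals the projection $e$, which is self-adjoint, settling the third axiom of Lemma \ref{lem21}. The other two axioms then fall out of the same identity: $(ab)(yx)(ab) = e\,ab = (axa)b = ab$ and $(ab)(yx)^{2} = e(yx) = (by)(yx) = (by^{2})x = yx$, where I use $axa = a$ and $by^{2} = y$ from Remark \ref{rem22}. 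By Lemma \ref{lem21}, $ab$ is then right core invertible with right core inverse $b^{\tiny{\textcircled{\tiny\#}}}_{r}a^{\tiny{\textcircled{\tiny\#}}}_{r}$.
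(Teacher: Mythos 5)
Your proposal is correct and follows essentially the same route as the paper: $(i)\Leftrightarrow(ii)$ by passing between $a\mathcal{R}=b\mathcal{R}$ and the self-adjoint idempotents $aa^{\tiny\textcircled{\tiny\#}}_r$, $bb^{\tiny\textcircled{\tiny\#}}_r$ and taking adjoints, $(i)\Leftrightarrow(iii)$ and $(i)\Leftrightarrow(iv)$ via the uniqueness clause of Theorem \ref{the25} applied to $b$, and the reverse-order-law claim by verifying the three equations of Lemma \ref{lem21} for $b^{\tiny\textcircled{\tiny\#}}_ra^{\tiny\textcircled{\tiny\#}}_r$ using $ax=by$, $axa=a$ and $by^2=y$. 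The only cosmetic difference is that you route the computations through the common projection $e=ax=by$, whereas the paper writes them out directly; the content is identical.
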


\begin{proof} $(i) \Rightarrow (ii)$ From $aa^{\tiny{\textcircled{\tiny\#}}}_{r} = bb^{\tiny{\textcircled{\tiny\#}}}_{r}$,
we get $a=bb^{\tiny{\textcircled{\tiny\#}}}_{r}a$ and $b=aa^{\tiny{\textcircled{\tiny\#}}}_{r}b$ which imply $a\mathcal{R}=b\mathcal{R}$.

$(ii) \Rightarrow (i)$ The assumption $a\mathcal{R}=b\mathcal{R}$ gives $a=bb^{\tiny{\textcircled{\tiny\#}}}_{r}a$ and $b=aa^{\tiny{\textcircled{\tiny\#}}}_{r}b$.
Thus, $$aa^{\tiny{\textcircled{\tiny\#}}}_{r} = bb^{\tiny{\textcircled{\tiny\#}}}_{r}aa^{\tiny{\textcircled{\tiny\#}}}_{r}
= (aa^{\tiny{\textcircled{\tiny\#}}}_{r} bb^{\tiny{\textcircled{\tiny\#}}}_{r})^*
= (bb^{\tiny{\textcircled{\tiny\#}}}_{r})^*
=bb^{\tiny{\textcircled{\tiny\#}}}_{r}.$$

$(i) \Leftrightarrow (iii)$ and $(i) \Leftrightarrow (iv)$ These equivalences follow by Theorem \ref{the25}.

The equality $aa^{\tiny\textcircled{\tiny\#}}_r=bb^{\tiny\textcircled{\tiny\#}}_r$ gives
$abb^{\tiny\textcircled{\tiny\#}}_ra^{\tiny\textcircled{\tiny\#}}_r
=a^2(a^{\tiny\textcircled{\tiny\#}}_r)^2=aa^{\tiny\textcircled{\tiny\#}}_r$,
$abb^{\tiny\textcircled{\tiny\#}}_ra^{\tiny\textcircled{\tiny\#}}_rab=aa^{\tiny\textcircled{\tiny\#}}_rab=ab$
and $ab(b^{\tiny\textcircled{\tiny\#}}_ra^{\tiny\textcircled{\tiny\#}}_r)^2=
aa^{\tiny\textcircled{\tiny\#}}_rb^{\tiny\textcircled{\tiny\#}}_ra^{\tiny\textcircled{\tiny\#}}_r
=bb^{\tiny\textcircled{\tiny\#}}_rb^{\tiny\textcircled{\tiny\#}}_ra^{\tiny\textcircled{\tiny\#}}_r=
b^{\tiny\textcircled{\tiny\#}}_ra^{\tiny\textcircled{\tiny\#}}_r$. So, $ab$ is right core invertible and
$(ab)^{\tiny\textcircled{\tiny\#}}_r=b^{\tiny\textcircled{\tiny\#}}_ra^{\tiny\textcircled{\tiny\#}}_r$.
\end{proof}

More sufficient conditions for the reverse order law of right core invertible elements are present now.

\begin{proposition}
Let $a, b\in \mathcal{R}$ be right core invertible. Then the following statements are equivalent:

$(i)$ $a=abb^{\tiny\textcircled{\tiny\#}}_r$ and $b=aa^{\tiny\textcircled{\tiny\#}}_rb$;

$(ii)$ $a^{\ast}\mathcal{R}\subseteq b\mathcal{R}\subseteq a\mathcal{R}$.

In addition, if one of statements (i)--(ii) holds,
then $ab$ is right core invertible and $b^{\tiny\textcircled{\tiny\#}}_ra^{\tiny\textcircled{\tiny\#}}_r$ is a right core inverse of $ab$.
\end{proposition}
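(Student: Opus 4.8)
The plan is to translate everything into statements about the two projections $e=aa^{\tiny\textcircled{\tiny\#}}_r$ and $f=bb^{\tiny\textcircled{\tiny\#}}_r$. By Remark \ref{rem22} these are self-adjoint idempotents, and from $aa^{\tiny\textcircled{\tiny\#}}_r a=a$ together with $a^{\tiny\textcircled{\tiny\#}}_r=a(a^{\tiny\textcircled{\tiny\#}}_r)^2$ one gets $a=ea\in e\mathcal{R}$ and $e\in a\mathcal{R}$, whence $a\mathcal{R}=e\mathcal{R}$; likewise $b\mathcal{R}=f\mathcal{R}$. The single fact I would use repeatedly is that for a projection $p$ and any $y\in\mathcal{R}$ one has $y\mathcal{R}\subseteq p\mathcal{R}$ if and only if $py=y$.

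For the equivalence of $(i)$ and $(ii)$ I would treat the two halves separately. The second equation of $(i)$, namely $b=aa^{\tiny\textcircled{\tiny\#}}_r b=eb$, says exactly $b\in e\mathcal{R}=a\mathcal{R}$, that is $b\mathcal{R}\subseteq a\mathcal{R}$. For the first equation, rewrite $a=abb^{\tiny\textcircled{\tiny\#}}_r$ as $a=af$ and take adjoints: since $f^{\ast}=f$, this is equivalent to $fa^{\ast}=a^{\ast}$, i.e. $a^{\ast}\in f\mathcal{R}=b\mathcal{R}$, i.e. $a^{\ast}\mathcal{R}\subseteq b\mathcal{R}$. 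The conjunction of the two statements is precisely $(ii)$, and every step is reversible, so $(i)\Leftrightarrow(ii)$.

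For the additional claim I would put $y=b^{\tiny\textcircled{\tiny\#}}_r a^{\tiny\textcircled{\tiny\#}}_r$ and verify the three defining equations of Lemma \ref{lem21} for $ab$. The two identities doing the work are $abb^{\tiny\textcircled{\tiny\#}}_r=af=a$ (the first half of $(i)$) and $aa^{\tiny\textcircled{\tiny\#}}_r b^{\tiny\textcircled{\tiny\#}}_r=b^{\tiny\textcircled{\tiny\#}}_r$ (because $b^{\tiny\textcircled{\tiny\#}}_r\in b\mathcal{R}\subseteq a\mathcal{R}=e\mathcal{R}$, so the projection $e$ fixes it). Using the first identity, $(ab)y=abb^{\tiny\textcircled{\tiny\#}}_r a^{\tiny\textcircled{\tiny\#}}_r=aa^{\tiny\textcircled{\tiny\#}}_r$, which is the projection $e$; this yields both $(ab)y(ab)=aa^{\tiny\textcircled{\tiny\#}}_r ab=ab$ and $((ab)y)^{\ast}=(ab)y$ at once. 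The remaining equation $y=(ab)y^2$ reduces, after replacing the leading $abb^{\tiny\textcircled{\tiny\#}}_r$ by $a$, to $aa^{\tiny\textcircled{\tiny\#}}_r b^{\tiny\textcircled{\tiny\#}}_r a^{\tiny\textcircled{\tiny\#}}_r=b^{\tiny\textcircled{\tiny\#}}_r a^{\tiny\textcircled{\tiny\#}}_r$, which is immediate from the second identity. Then Lemma \ref{lem21} gives that $ab$ is right core invertible with right core inverse $y$.

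I expect the only genuine subtlety to be bookkeeping: one must read the equation $a=af$ through the involution to extract the left-hand inclusion $a^{\ast}\mathcal{R}\subseteq b\mathcal{R}$, and one must notice that the verification of $y=(ab)y^2$ is exactly the place where the inclusion $b\mathcal{R}\subseteq a\mathcal{R}$ (equivalently $b^{\tiny\textcircled{\tiny\#}}_r\in a\mathcal{R}$) is indispensable, whereas the other two equations need only the first half of $(i)$. Apart from this, the computation parallels the final assertion of Proposition \ref{pro28}, but here it is carried out without assuming $aa^{\tiny\textcircled{\tiny\#}}_r=bb^{\tiny\textcircled{\tiny\#}}_r$.
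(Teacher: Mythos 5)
Your proposal is correct and follows essentially the same route as the paper: the equivalence is obtained by reading $b=aa^{\tiny\textcircled{\tiny\#}}_rb$ as $b\mathcal{R}\subseteq a\mathcal{R}$ and applying the involution to $a=abb^{\tiny\textcircled{\tiny\#}}_r$ to get $a^*=bb^{\tiny\textcircled{\tiny\#}}_ra^*$, and the reverse-order law is verified through the same two identities $abb^{\tiny\textcircled{\tiny\#}}_r=a$ and $aa^{\tiny\textcircled{\tiny\#}}_rb^{\tiny\textcircled{\tiny\#}}_r=b^{\tiny\textcircled{\tiny\#}}_r$ together with Lemma \ref{lem21}. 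Your reformulation in terms of the projections $e=aa^{\tiny\textcircled{\tiny\#}}_r$ and $f=bb^{\tiny\textcircled{\tiny\#}}_r$ is only a notational repackaging of the paper's computations, not a different argument.
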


\begin{proof} $(i)$ $\Rightarrow$ $(ii)$ The hypothesis $b=aa^{\tiny\textcircled{\tiny\#}}_rb$ yields $b\mathcal{R}\subseteq a\mathcal{R}$.
Applying involution to $a=abb^{\tiny\textcircled{\tiny\#}}_r$, we get $a^*=bb^{\tiny\textcircled{\tiny\#}}_ra^*$, that is,
$a^{\ast}\mathcal{R}\subseteq b\mathcal{R}$.

$(ii)$ $\Rightarrow$ $(i)$ From $a^{\ast}\mathcal{R}\subseteq b\mathcal{R}$ and $b\mathcal{R}\subseteq a\mathcal{R}$,
we obtain $a^*=bb^{\tiny\textcircled{\tiny\#}}_ra^*$ and
$b=aa^{\tiny\textcircled{\tiny\#}}_rb$. Thus, $a=(a^*)^*=(bb^{\tiny\textcircled{\tiny\#}}_ra^*)^*=abb^{\tiny\textcircled{\tiny\#}}_r$.

Using $a=abb^{\tiny\textcircled{\tiny\#}}_r$, we conclude that
$abb^{\tiny\textcircled{\tiny\#}}_ra^{\tiny\textcircled{\tiny\#}}_r
=aa^{\tiny\textcircled{\tiny\#}}_r$ and
$abb^{\tiny\textcircled{\tiny\#}}_ra^{\tiny\textcircled{\tiny\#}}_rab=aa^{\tiny\textcircled{\tiny\#}}_rab=ab$.
By $b=aa^{\tiny\textcircled{\tiny\#}}_rb$, we get
$b^{\tiny\textcircled{\tiny\#}}_r=b(b^{\tiny\textcircled{\tiny\#}}_r)^2=aa^{\tiny\textcircled{\tiny\#}}_rb
(b^{\tiny\textcircled{\tiny\#}}_r)^2=aa^{\tiny\textcircled{\tiny\#}}_rb^{\tiny\textcircled{\tiny\#}}_r$. Therefore,
$ab(b^{\tiny\textcircled{\tiny\#}}_ra^{\tiny\textcircled{\tiny\#}}_r)^2=
aa^{\tiny\textcircled{\tiny\#}}_rb^{\tiny\textcircled{\tiny\#}}_ra^{\tiny\textcircled{\tiny\#}}_r=
b^{\tiny\textcircled{\tiny\#}}_ra^{\tiny\textcircled{\tiny\#}}_r$ and
$(ab)^{\tiny\textcircled{\tiny\#}}_r=b^{\tiny\textcircled{\tiny\#}}_ra^{\tiny\textcircled{\tiny\#}}_r$.
\end{proof}

Let $p=p^2\in\mathcal{R}$ be an idempotent. Then we can represent any element
$a\in\mathcal{R}$ as $$a=\sbmatrix{cc}
a_{11}&a_{12}\\a_{21}&a_{22}\endsbmatrix_p,$$ where $a_{11}=pap$,
$a_{12}=pa(1-p)$, $a_{21} = (1-p)ap$, $a_{22}= (1-p)a(1-p)$.

If $p=p^2=p^*$, then
$$a^*=\sbmatrix{cc}
a_{11}^*&a_{21}^*\\a_{12}^*&a_{22}^*\endsbmatrix_p.$$
Now we give matrix representations for a right core invertible element and its right core inverse.

\begin{theorem} \label{the210}
Let $a\in\mathcal{R}$. Then the following statements are equivalent:

$(i)$ $a$ is right core invertible and $x\in\mathcal{R}$ is a right core inverse of $a$;

$(ii)$ there exists a projection $q\in\mathcal{R}$ such that
\begin{equation}
a=\sbmatrix{cc} a_1&a_2\\0&0\endsbmatrix_q,\qquad
x=\sbmatrix{cc} x_1&x_2\\0&0\endsbmatrix_q,\label{a-x-right-core}
\end{equation}
where $a_1$ is right invertible in $q\mathcal{R}q$, $x_1=(a_1)^{-1}_r$ and $a_1x_2=0$;

$(iii)$ there exists a projection $p\in\mathcal{R}$ such that
$$a=\sbmatrix{cc} 0&0\\a_1&a_2\endsbmatrix_p,\qquad
x=\sbmatrix{cc} 0&0\\x_1&x_2\endsbmatrix_p,$$
where $a_2$ is right invertible in $(1-p)R(1-p)$, $x_2=(a_2)^{-1}_r$ and $a_2x_1=0$.
\end{theorem}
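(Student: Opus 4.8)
The plan is to prove the cycle $(i)\Rightarrow(ii)\Rightarrow(i)$ and then obtain $(iii)$ as the mirror image of $(ii)$. The whole argument hinges on recognizing that the correct projection is $q=ax$ itself. First I would verify that $q=ax$ is a projection: it is self-adjoint by the third equation $(ax)^\ast=ax$ of Lemma \ref{lem21}, and it is idempotent because $axa=a$ gives $(ax)(ax)=(axa)x=ax$. The other two equations of Lemma \ref{lem21} then place $a$ and $x$ in the ``top row'' of the decomposition along $q$: from $qa=axa=a$ one gets $(1-q)a=0$, and from $qx=ax^2=x$ (using $x=ax^2$) one gets $(1-q)x=0$. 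These are exactly the vanishing of the bottom rows in the matrix forms of $a$ and $x$ asserted in $(ii)$, so $a=a_1+a_2$ and $x=x_1+x_2$ with $a_1=qaq=aq$, $a_2=qa(1-q)$, $x_1=qxq=xq$, $x_2=qx(1-q)$.

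For $(i)\Rightarrow(ii)$ it then remains to read off the corner-ring data. I would compute $a_1x_1=aq\,xq=a(qx)q=axq=q\cdot q=q$, which shows $a_1$ is right invertible in $q\mathcal{R}q$ (whose identity is $q$) with right inverse $x_1$; and $a_1x_2=aq\,qx(1-q)=ax(1-q)=q(1-q)=0$. This yields precisely the data required in $(ii)$, with $q$ independent of the chosen right core inverse by Remark \ref{rem22}.

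The converse $(ii)\Rightarrow(i)$ is a single block multiplication: since both $a$ and $x$ have zero bottom rows, $ax=\sbmatrix{cc} a_1x_1 & a_1x_2\\ 0 & 0\endsbmatrix_q=\sbmatrix{cc} q & 0\\ 0 & 0\endsbmatrix_q=q$, using $a_1x_1=q$ and $a_1x_2=0$. From $ax=q$ one recovers the three defining equations at once: $(ax)^\ast=q^\ast=q=ax$; $axa=qa=a$ because $a=qa$; and $ax^2=qx=x$ because $x=qx$. By Lemma \ref{lem21} this shows $a$ is right core invertible and $x$ is a right core inverse, closing the equivalence $(i)\Leftrightarrow(ii)$. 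Finally, $(iii)$ follows from $(ii)$ via the substitution $p=1-q$ (equivalently, by rerunning the argument with the spectral idempotent $p=a^\pi=1-ax$): the condition $pa=0$ replaces $(1-q)a=0$, the corner ring $(1-p)\mathcal{R}(1-p)$ is $q\mathcal{R}q$, and swapping the diagonal blocks turns $a_1x_2=0$ into $a_2x_1=0$.

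I expect the only genuinely content-bearing step to be checking that $a_1$ is right invertible \emph{in the corner ring} $q\mathcal{R}q$ rather than in $\mathcal{R}$; everything else is bookkeeping with the two idempotent identities $qa=a$ and $qx=x$. The identification of $q=ax$ as the correct projection, justified by its invariance in Remark \ref{rem22}, is exactly what makes all the block computations collapse, so once that choice is made the obstacle essentially disappears.
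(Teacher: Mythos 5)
Your proposal is correct and follows essentially the same route as the paper: both take $q=ax$ as the projection, use $qa=a$ and $qx=x$ to place $a$ and $x$ in the top row, compute $a_1x_1=q$ and $a_1x_2=0$ for the forward direction, recover the three equations of Lemma \ref{lem21} from $ax=q$ for the converse, and obtain $(iii)$ via $p=1-ax$. Your explicit check that $q$ is a projection and the appeal to Remark \ref{rem22} for its invariance are minor additions the paper leaves implicit.
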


\begin{proof}
$(i) \Rightarrow (ii)$ Let $q=ax$. Then $qa=axa=a$ implies $(1-q)a=0$ and $qx=ax^2=x$.
So, $a$ and $x$ are represented as in (\ref{a-x-right-core}). Since $a_1=qaq=aq=a^2x$ and $x_1=qxq=xax$,
we get $a_1x_1=ax=q$, i.e. $x_1$ is a right inverse of $a_1$ in $qRq$. By $x_2=qx(1-q)=x(1-ax)$, we have
$a_1x_2=ax(1-ax)=0$.

$(ii)\Rightarrow (i)$ Because $ax=\sbmatrix{cc} q&0\\0&0\endsbmatrix_q$,
we can verify that $x$ satisfies $(ax)^{\ast}=ax$, $axa=a$ and $x=ax^{2}$.
Using Lemma \ref{lem21}, we deduce that $a$ is right core invertible and $x$ is a right core inverse of $a$.

$(i)\Leftrightarrow (iii)$ This equivalence follows similarly as $(i)\Leftrightarrow (ii)$ for $p=1-ax$.
\end{proof}

Notice that $p$ and $q$, which appear in Theorem \ref{the210}, are invariant on the choice of $x$.
We present one decomposition of a right core invertible element which is also invariant on the choice of right core inverse.

\begin{theorem}
Let $a\in\mathcal{R}$ be right core invertible. Then $a=a_1+a_2$, where

$(i)$ $a_1$ is right core invertible,

$(ii)$ $a_2^2=0$,

$(iii)$ $a_1a_2^*=0=a_2a_1$.

In addition, $a^2a^{\tiny\textcircled{\tiny\#}}_r$ is right core invertible and
$a^{\tiny\textcircled{\tiny\#}}_raa^{\tiny\textcircled{\tiny\#}}_r$ (or $a^{\tiny\textcircled{\tiny\#}}_r$) is a right core inverse of $a^2a^{\tiny\textcircled{\tiny\#}}_r$.
\end{theorem}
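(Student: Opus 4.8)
The plan is to read off the decomposition from the projection $e=aa^{\tiny\textcircled{\tiny\#}}_r$, which by Remark \ref{rem22} is self-adjoint and (being idempotent) a genuine projection, and from the block form of Theorem \ref{the210} with $q=e$. Concretely I would set $a_1=a^2a^{\tiny\textcircled{\tiny\#}}_r=ae$, the diagonal corner $qaq$, and $a_2=a-a^2a^{\tiny\textcircled{\tiny\#}}_r=a(1-e)$, the upper-right corner $qa(1-q)$; then $a=a_1+a_2$ is automatic. All the cancellations below rest on two identities pulled from Remark \ref{rem22}: the annihilation relation $(1-e)a=a-aa^{\tiny\textcircled{\tiny\#}}_ra=0$, and $a_1(1-e)=ae(1-e)=0$ (using $e^2=e$), together with $e^*=e$.

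Granting this choice, parts $(ii)$ and $(iii)$ are one-line cancellations. For $(ii)$, $a_2^2=a(1-e)\,a(1-e)=a\bigl[(1-e)a\bigr](1-e)=0$. For $(iii)$, the same relation gives $a_2a_1=a(1-e)\,ae=a\bigl[(1-e)a\bigr]e=0$; and since $a_2^*=(1-e)^*a^*=(1-e)a^*$, we obtain $a_1a_2^*=ae(1-e)a^*=\bigl[a_1(1-e)\bigr]a^*=0$.

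The real content is $(i)$ and the concluding assertion, which I would prove simultaneously by verifying the three equations of Lemma \ref{lem21} for $a_1=a^2a^{\tiny\textcircled{\tiny\#}}_r$ against each candidate inverse $y=a^{\tiny\textcircled{\tiny\#}}_r$ and $y'=a^{\tiny\textcircled{\tiny\#}}_raa^{\tiny\textcircled{\tiny\#}}_r$. The arithmetic hinges on $a^2(a^{\tiny\textcircled{\tiny\#}}_r)^2=a\cdot a(a^{\tiny\textcircled{\tiny\#}}_r)^2=aa^{\tiny\textcircled{\tiny\#}}_r=e$ and on $ea^{\tiny\textcircled{\tiny\#}}_r=a\,(a^{\tiny\textcircled{\tiny\#}}_r)^2=a^{\tiny\textcircled{\tiny\#}}_r$, both consequences of $a^{\tiny\textcircled{\tiny\#}}_r=a(a^{\tiny\textcircled{\tiny\#}}_r)^2$. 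For $y$ one finds $a_1y=a^2(a^{\tiny\textcircled{\tiny\#}}_r)^2=e$, so $(a_1y)^*=a_1y$; $a_1ya_1=ea_1=(aa^{\tiny\textcircled{\tiny\#}}_ra)\,aa^{\tiny\textcircled{\tiny\#}}_r=a_1$; and $a_1y^2=ey=ea^{\tiny\textcircled{\tiny\#}}_r=a^{\tiny\textcircled{\tiny\#}}_r=y$. For $y'$ the same steps give $a_1y'=e\,aa^{\tiny\textcircled{\tiny\#}}_r=e$, $a_1y'a_1=a_1$, and $a_1(y')^2=ey'=(ea^{\tiny\textcircled{\tiny\#}}_r)aa^{\tiny\textcircled{\tiny\#}}_r=a^{\tiny\textcircled{\tiny\#}}_raa^{\tiny\textcircled{\tiny\#}}_r=y'$. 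In both cases Lemma \ref{lem21} yields that $a_1=a^2a^{\tiny\textcircled{\tiny\#}}_r$ is right core invertible, which is exactly $(i)$ together with the additional claim.

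I do not expect a genuine obstacle, only a bookkeeping hazard: Remark \ref{rem22} supplies $aa^{\tiny\textcircled{\tiny\#}}_ra=a$ and $a^{\tiny\textcircled{\tiny\#}}_r=a(a^{\tiny\textcircled{\tiny\#}}_r)^2$ but neither the outer identity $a^{\tiny\textcircled{\tiny\#}}_raa^{\tiny\textcircled{\tiny\#}}_r=a^{\tiny\textcircled{\tiny\#}}_r$ nor uniqueness of the right core inverse, so the argument must use only the available one-sided relations. This is also precisely why $y$ and $y'$ can differ yet both serve as right core inverses, explaining the word ``or'' in the statement.
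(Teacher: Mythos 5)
Your proposal is correct and follows essentially the same route as the paper: the same decomposition $a_1=a^2a^{\tiny\textcircled{\tiny\#}}_r$, $a_2=a(1-aa^{\tiny\textcircled{\tiny\#}}_r)$, the same cancellations for $(ii)$--$(iii)$, and verification of the three equations of Lemma \ref{lem21} for both candidate inverses (the paper writes out $y=xax$ and leaves $y=x$ as ``similarly''). Your closing caution about using only the one-sided identities from Remark \ref{rem22} is well placed and consistent with what the paper actually uses.
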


\begin{proof} If $x$ is a right core inverse of $a$, $a_1=a^2x$ and $a_2=a-a^2x=a(1-ax)$, then $a=a_1+a_2$.
Notice that $a_1a_2^*=a^2x(1-ax)a^*=(a^2x-a^2x)a^*=0$, $a_2a_1=a(1-ax)a^2x=a(a^2x-a^2x)=0$ and
$a_2^2=a(1-ax)a(1-ax)=a(a-a)(1-ax)=0$.

Set $y=xax$. Since $a_1y=a^2x^2ax=axax=ax$, we obtain that $(a_1y)^*=a_1y$, $a_1ya_1=axa^2x=a^2x=a_1$ and $a_1y^2=ax^2ax=xax=y$.
Hence, $a_1$ is right core invertible and $y$ is a right core inverse of $a_1$. Similarly, we prove that $a^{\tiny\textcircled{\tiny\#}}_r$ is a right core inverse of $a^2a^{\tiny\textcircled{\tiny\#}}_r$.
\end{proof}

\section{ More characterizations of right core inverses}

From Lemma \ref{lem21} and Remark \ref{rem22},
we know that if $a$ is right core invertible and $x$ is one right core inverse of $a$,
then $a\in \mathcal{R}^{\{1,3\}}$. By $a=axa=a(ax^{2})a$, it gives that $a\mathcal{R}=a^{2}\mathcal{R}$.
In the following, we will prove that the converse is also true.
Moreover, one more thing that we want to remind is that, in \cite{C1},
the authors gave some characterizations when $a\in \mathcal{R}^{\dag}$ and $a\mathcal{R} = a^{2}\mathcal{R}$.
It is proven that $a\in \mathcal{R}^{\dag}$ and $a\mathcal{R} = a^{2}\mathcal{R}$ if and only if $(a^{\ast}a)^{k}$ is right invertible along $a$ for $k\in \mathbb{N}^{+}$.

\begin{theorem}\label{the31}
Let $a \in \mathcal{R}$. Then the following are equivalent:

$(i)$ $a$ is right core invertible;

$(ii)$ $a\in \mathcal{R}^{\{1,3\}}$ and $a\mathcal{R}=a^{2}\mathcal{R}$;

$(iii)$ $a\in \mathcal{R}^{\{1,3\}}$ and $a\mathcal{R}=a^{n}\mathcal{R}$ for any $n\geq 2$;

$(iv)$ $a\in \mathcal{R}^{\{1,3\}}$ and $a\mathcal{R}=a^{n}\mathcal{R}$ for some $n\geq 2$;

$(v)$ $a^{n}$  is right core invertible and $a\mathcal{R}=a^{n}\mathcal{R}$ for any $n\geq 2$;

$(vi)$ $a^{n}$  is right core invertible and $a\mathcal{R}=a^{n}\mathcal{R}$ for some $n\geq 2$;

$(vii)$ $a^n\in \mathcal{R}^{\{1,3\}}$ and $a{R}=a^{k}\mathcal{R}$ for any $n\geq 2$ and $k > n$.

$(viii)$ $a^n\in \mathcal{R}^{\{1,3\}}$ and $a{R}=a^{k}\mathcal{R}$ for some $n\geq 2$ and $k > n$.

In this case, for any $n\geq 2$,
$$(a^n)^{\tiny{\textcircled{\tiny\#}}}_{r}=(a^{\tiny{\textcircled{\tiny\#}}}_{r})^n\quad and\quad
a^{\tiny{\textcircled{\tiny\#}}}_{r}=a^{n-1}(a^n)^{\tiny{\textcircled{\tiny\#}}}_{r}.$$
\end{theorem}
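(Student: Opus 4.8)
The plan is to route everything through the central pair (i)$\Leftrightarrow$(ii) and then attach the remaining statements to this hub. First I would dispose of the purely ideal-theoretic equivalences (ii)$\Leftrightarrow$(iii)$\Leftrightarrow$(iv): multiplying $a\mathcal{R}=a^{2}\mathcal{R}$ on the left by $a$ and iterating gives $a\mathcal{R}=a^{n}\mathcal{R}$ for every $n\ge 2$, while conversely, for any single $n\ge 2$ the chain $a\mathcal{R}=a^{n}\mathcal{R}\subseteq a^{2}\mathcal{R}\subseteq a\mathcal{R}$ collapses to $a\mathcal{R}=a^{2}\mathcal{R}$; the factor $a\in\mathcal{R}^{\{1,3\}}$ is carried along unchanged throughout. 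The implication (i)$\Rightarrow$(ii) is already recorded in the discussion immediately preceding the theorem, via $a=axa=a(ax^{2})a$.

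The heart of the argument is (ii)$\Rightarrow$(i). Given a $\{1,3\}$-inverse $b$ of $a$, so that $aba=a$ and $(ab)^{\ast}=ab$, together with a witness $c$ for $a=a^{2}c$ coming from $a\mathcal{R}=a^{2}\mathcal{R}$, I would apply the involution to $aba=a$ to obtain $a^{\ast}=a^{\ast}(ab)^{\ast}=a^{\ast}ab$, and then substitute $ab=a^{2}cb$ to conclude $a^{\ast}=a^{\ast}a^{2}(cb)\in a^{\ast}a^{2}\mathcal{R}$. This is precisely the defining relation $a^{\ast}\in a^{\ast}a^{2}\mathcal{R}$ for right core invertibility recalled in the introduction, so $a$ is right core invertible. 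I expect this to be the main obstacle, because it is where the one-sided nature must be respected: one cannot pass through a group inverse, and the identity $a^{\ast}=a^{\ast}ab$ coming from the Hermitian projection $ab$ is exactly what makes the ideal hypothesis usable.

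For the statements about powers and the closing formulas, I would first record, for any right core inverse $x$ of $a$, that $e:=ax$ is a projection with $ea=a$ and $ex=x$ (the latter from $x=ax^{2}$), and then prove by induction that $a^{n}x^{n}=e$ for all $n\ge 1$, the inductive step being $a^{n}x^{n}=a^{n-1}(ax)x^{n-1}=a^{n-1}ex^{n-1}=a^{n-1}x^{n-1}$. With this in hand, taking $X=x^{n}$ one checks directly that $a^{n}Xa^{n}=ea^{n}=a^{n}$, $a^{n}X^{2}=ex^{n}=x^{n}=X$ and $(a^{n}X)^{\ast}=e^{\ast}=e=a^{n}X$, so by Lemma \ref{lem21} $a^{n}$ is right core invertible with $(a^{n})^{\tiny{\textcircled{\tiny\#}}}_{r}=x^{n}=(a^{\tiny{\textcircled{\tiny\#}}}_{r})^{n}$; moreover $a^{n-1}x^{n}=(a^{n-1}x^{n-1})x=ex=x$ yields $a^{\tiny{\textcircled{\tiny\#}}}_{r}=a^{n-1}(a^{n})^{\tiny{\textcircled{\tiny\#}}}_{r}$. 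Combined with the ideal equivalences already established, this gives (i)$\Rightarrow$(v) and (i)$\Rightarrow$(vii) for every admissible $n$ and $k$.

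It remains to close the loops (vi)$\Rightarrow$(i) and (viii)$\Rightarrow$(i). In both cases the ideal hypothesis forces $a\mathcal{R}=a^{n}\mathcal{R}$ (for (viii) through $a\mathcal{R}=a^{k}\mathcal{R}\subseteq a^{n}\mathcal{R}\subseteq a\mathcal{R}$, using $k>n$), so $a=a^{n}d$ for some $d\in\mathcal{R}$. From a $\{1,3\}$-inverse $b$ of $a^{n}$ — available because $a^{n}$ is right core invertible in (vi), hence lies in $\mathcal{R}^{\{1,3\}}$ by the already-proven (i)$\Rightarrow$(ii) applied to $a^{n}$, and available by hypothesis in (viii) — I would set $c=a^{n-1}b$ and verify that $ac=a^{n}b$ is a projection and that $aca=a^{n}ba=a^{n}b\,a^{n}d=a^{n}d=a$, so $c$ is a $\{1,3\}$-inverse of $a$. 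Together with $a\mathcal{R}=a^{2}\mathcal{R}$ this reduces to the case (ii)$\Rightarrow$(i). Finally, the trivial implications (iii)$\Rightarrow$(iv), (v)$\Rightarrow$(vi) and (vii)$\Rightarrow$(viii) complete the system of equivalences.
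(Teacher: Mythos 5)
Your proof is correct, and its overall architecture (hub at (i)$\Leftrightarrow$(ii), trivial ideal-chain equivalences, powers of a fixed right core inverse giving (v)--(viii)) mirrors the paper's. The one genuinely different step is the crucial implication (ii)$\Rightarrow$(i): the paper proves (iv)$\Rightarrow$(i) by forming the projection $p=1-aa^{(1,3)}$ and exhibiting an explicit right inverse of $p+a$, namely $1+a^{n-1}ta^{(1,3)}-a^{n-1}t$ where $a=a^{n}t$, so that Theorem \ref{the25} applies; you instead apply the involution to $aba=a$ to get $a^{\ast}=a^{\ast}ab$ and substitute $a=a^{2}c$ to land directly in the definitional criterion $a^{\ast}\in a^{\ast}a^{2}\mathcal{R}$. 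Your route is shorter and independent of the Section 2 projection machinery (it uses only the characterization of right $(a,a^{\ast})$-invertibility recalled in the introduction), while the paper's route has the side benefit of producing the explicit expression $a^{\tiny{\textcircled{\tiny\#}}}_{r}=u^{-1}_{r}(1-p)$ from Theorem \ref{the25}. Similarly, for (vi)$\Rightarrow$(i) and (viii)$\Rightarrow$(i) the paper verifies the three equations of Lemma \ref{lem21} directly for $x=a^{n-1}(a^{n})^{\tiny{\textcircled{\tiny\#}}}_{r}$, resp. $x=a^{n-1}(a^{n})^{(1,3)}$, whereas you reduce to (ii) by checking that $a^{n-1}(a^{n})^{(1,3)}$ is a $\{1,3\}$-inverse of $a$; both are sound, but note that the paper's direct verification is what justifies the closing identity $a^{\tiny{\textcircled{\tiny\#}}}_{r}=a^{n-1}(a^{n})^{\tiny{\textcircled{\tiny\#}}}_{r}$ for an \emph{arbitrary} right core inverse of $a^{n}$, which your argument establishes only for the particular choice $(a^{n})^{\tiny{\textcircled{\tiny\#}}}_{r}=(a^{\tiny{\textcircled{\tiny\#}}}_{r})^{n}$ --- a negligible point given the non-uniqueness of these inverses, but worth a sentence if you want the displayed formulas in full generality.
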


\begin{proof}
$(i) \Rightarrow (ii)$ and $(iii) \Rightarrow (iv)$ It is clear.

$(ii) \Rightarrow (iii)$ Since $a\mathcal{R}=a^{2}\mathcal{R}$, we have $a=a^{2}t$ for $t\in \mathcal{R}$. Then we get
\begin{center}
$a=aat=a(a^{2}t)t=a^{3}t^{2}=a^{2}at^{2}=a^{2}(a^{2}t)t^{2}=a^{4}t^{3}= \cdots =a^{n}t^{n-1}$.
\end{center}

$(iv) \Rightarrow (i)$ The condition $a\in \mathcal{R}^{\{1,3\}}$ implies that $p=1-aa^{(1,3)}$ is a projection.
Then it is clear that $pa=0$. Since $a\mathcal{R}=a^{n}\mathcal{R}$, we have $a=a^{n}t$ for some $t\in \mathcal{R}$.
Next we will prove that $a+p\in \mathcal{R}^{-1}_{r}$ and then, by Theorem \ref{the25}, we get $a$ is right core invertible.
Indeed, $(p+a)(1+a^{n-1}ta^{(1,3)}-a^{n-1}t)=p+a+a^{n}ta^{(1,3)}-a^{n}t=p+aa^{(1,3)}=1$.

$(i) \Rightarrow (v)$ Suppose that $x$ is a right core inverse of $a$ and $n\geq 2$.
Then, from
\begin{center}
$ax=a(ax^2)=a^2x^2=\dots=a^nx^n$,
\end{center}
we get
$(a^nx^n)^{\ast}=(ax)^{\ast}=ax=a^nx^n$.
Moreover, it is easy to get $a^nx^na^n=axa^n=a^n$ and $a^n(x^n)^2=axx^n=(ax^2)x^{n-1}=x^n$.
Hence, by Lemma \ref{lem21}, we obtain $a^n$  is right core invertible and
$(a^n)^{\tiny{\textcircled{\tiny\#}}}_{r}=x^n$.
Since $(i)$ is equivalent to $(iii)$, we have that $a\mathcal{R}=a^{n}\mathcal{R}$.

$(v) \Rightarrow (vi)$ This is obvious.

$(vi) \Rightarrow (i)$ Let $x=a^{n-1}(a^n)^{\tiny{\textcircled{\tiny\#}}}_{r}$ and $a=a^nt$ for some $t\in \mathcal{R}$.
Firstly, we observe that $ax=a^n(a^n)^{\tiny{\textcircled{\tiny\#}}}_{r}$, it gives $(ax)^{\ast}=ax$. Further,
$axa=a^n(a^n)^{\tiny{\textcircled{\tiny\#}}}_{r}a=a^n(a^n)^{\tiny{\textcircled{\tiny\#}}}_{r}a^nt
=a^nt=a$ and $ax^2=a^n(a^n)^{\tiny{\textcircled{\tiny\#}}}_{r}a^{n-1}(a^n)^{\tiny{\textcircled{\tiny\#}}}_{r}
=a^n(a^n)^{\tiny{\textcircled{\tiny\#}}}_{r}a^nta^{n-2}(a^n)^{\tiny{\textcircled{\tiny\#}}}_{r}
=a^nta^{n-2}(a^n)^{\tiny{\textcircled{\tiny\#}}}_{r}=a^{n-1}(a^n)^{\tiny{\textcircled{\tiny\#}}}_{r}=x$.
So, by Lemma \ref{lem21}, $a$ is right core invertible and $a^{\tiny{\textcircled{\tiny\#}}}_{r}=x$.

$(i) \Rightarrow (vii)$ Consequently, by previous proofs. 

$(vii) \Rightarrow(viii)$ is obvious.

$(viii) \Rightarrow(i)$ Let $x=a^{n-1}(a^n)^{(1,3)}$ and $a=a^kt$ for some $t\in \mathcal{R}$.
It is clear that $(ax)^{\ast}=ax=a^{n}(a^n)^{(1,3)}$. Further, by $k>n$,
$axa=a^n(a^n)^{(1,3)}a=a^n(a^n)^{(1,3)}a^kt
=a^kt=a$ and $ax^2=a^n(a^n)^{(1,3)}a^{n-1}(a^n)^{(1,3)}
=a^n(a^n)^{(1,3)}a^kta^{n-2}(a^n)^{(1,3)}
=a^kta^{n-2}(a^n)^{(1,3)}=a^{n-1}(a^n)^{(1,3)}=x$.
So, by Lemma \ref{lem21}, $a$ is right core invertible and $a^{\tiny{\textcircled{\tiny\#}}}_{r}=x$.
\end{proof}

\begin{remark}\label{rem32}
It is well known that $a\in \mathcal{R}^{\{1,3\}} \Leftrightarrow \mathcal{R}a=\mathcal{R}a^{\ast}a$.
Indeed, if $a\in \mathcal{R}^{\{1,3\}}$, then $a=aa^{(1,3)}a=(a^{(1,3)})^{\ast}a^{\ast}a$,
it gives that $\mathcal{R}a=\mathcal{R}a^{\ast}a$.
Conversely, if $a=ta^{\ast}a$ for some $t\in \mathcal{R}$, then $at^{\ast}=ta^{\ast}at^{\ast}$,
it gives that
$(at^{\ast})^{\ast}=(ta^{\ast}at^{\ast})^{\ast}=ta^{\ast}at^{\ast}=at^{\ast}$.
Moreover, $a=ta^{\ast}a$ implies that $a^{\ast}=a^{\ast}at^{\ast}$.
Postmultiply by $a$, we have $a^{\ast}a=a^{\ast}at^{\ast}a$.
Premultiply by $t$, we have $ta^{\ast}a=ta^{\ast}at^{\ast}a$, i.e. $a=at^{\ast}a$.
Then we obtain $t^{\ast}$ is a $\{1,3\}$-inverse of $a$, and $a\in \mathcal{R}^{\{1,3\}}$.
\end{remark}

In the following, we will give some characterizations of $\mathcal{R}a=\mathcal{R}(a^{\ast})^{n}a$, where $n\geq2$.
For the case of $n=2$, we can check that if $a=t(a^{\ast})^{2}a$ for some $t\in \mathcal{R}$, then $a^{\ast}=a^{\ast}a^{2}t^{\ast}$.
It gives that $t(a^{\ast})^{2}=ta^{\ast}a^{\ast}=ta^{\ast}(a^{\ast}a^{2}t^{\ast})=(ta^{\ast}a^{\ast}a)at^{\ast}=a^{2}t^{\ast}$.
This implies that $a=a^{2}t^{\ast}a$.
Hence, $\mathcal{R}a=\mathcal{R}(a^{\ast})^{2}a$ gives that $\mathcal{R}a=\mathcal{R}a^{\ast}a$ and $a\mathcal{R}=a^{2}\mathcal{R}$.
This means $\mathcal{R}a=\mathcal{R}(a^{\ast})^{2}a$ can implies $a$ is right core invertible.

\begin{corollary}\label{cor33}
Let $a \in \mathcal{R}$ and $n \geq 2$. Then the following are equivalent:

$(i)$ $a$ is right core invertible;

$(ii)$ $\mathcal{R}a=\mathcal{R}a^{\ast}a$ and $a\mathcal{R}=a^{n}\mathcal{R}$;

$(iii)$ $\mathcal{R}a=\mathcal{R}(a^{\ast})^{n}a$;

$(iv)$ $\mathcal{R}a^n=\mathcal{R}(a^{\ast})^na^n$ and $a\mathcal{R}=a^{k}\mathcal{R}$ for $k>n$;
\end{corollary}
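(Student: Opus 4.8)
The plan is to route everything through Theorem \ref{the31} and Remark \ref{rem32}, reserving a direct argument only for the bridge between the two-condition statement $(ii)$ and the single-condition statement $(iii)$.

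First I would dispatch $(i)\Leftrightarrow(ii)$ and $(i)\Leftrightarrow(iv)$ almost for free. By Remark \ref{rem32}, the condition $\mathcal{R}a=\mathcal{R}a^{\ast}a$ is exactly $a\in\mathcal{R}^{\{1,3\}}$; applying that same remark to $a^{n}$ in place of $a$ (and using $(a^{n})^{\ast}=(a^{\ast})^{n}$) shows that $\mathcal{R}a^{n}=\mathcal{R}(a^{\ast})^{n}a^{n}$ is exactly $a^{n}\in\mathcal{R}^{\{1,3\}}$. Thus $(ii)$ reads ``$a\in\mathcal{R}^{\{1,3\}}$ and $a\mathcal{R}=a^{n}\mathcal{R}$'', which is the content of conditions $(iii)$--$(iv)$ of Theorem \ref{the31} for the given $n$, while $(iv)$ reads ``$a^{n}\in\mathcal{R}^{\{1,3\}}$ and $a\mathcal{R}=a^{k}\mathcal{R}$ for $k>n$'', which matches conditions $(vii)$--$(viii)$ of Theorem \ref{the31}. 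Since both clusters are proved equivalent to $(i)$ there, the equivalences $(i)\Leftrightarrow(ii)$ and $(i)\Leftrightarrow(iv)$ follow immediately.

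The real work is $(ii)\Leftrightarrow(iii)$. For $(ii)\Rightarrow(iii)$ I would note that $\mathcal{R}(a^{\ast})^{n}a\subseteq\mathcal{R}a$ holds trivially, so only $a\in\mathcal{R}(a^{\ast})^{n}a$ needs proof. From $\mathcal{R}a=\mathcal{R}a^{\ast}a$ write $a=sa^{\ast}a$ for some $s$, and from $a\mathcal{R}=a^{n}\mathcal{R}$ write $a=a^{n}t$; taking adjoints of the latter gives $a^{\ast}=t^{\ast}(a^{\ast})^{n}$. Substituting this for the single inner factor $a^{\ast}$ in $a=sa^{\ast}a$ produces $a=s\bigl(t^{\ast}(a^{\ast})^{n}\bigr)a=(st^{\ast})(a^{\ast})^{n}a$, the desired membership. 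For the converse $(iii)\Rightarrow(ii)$, start from $a=t(a^{\ast})^{n}a$, regroup it as $a=\bigl(t(a^{\ast})^{n-1}\bigr)a^{\ast}a$, and apply Remark \ref{rem32} with $t(a^{\ast})^{n-1}$ in the role of its $t$: this at once yields $\mathcal{R}a=\mathcal{R}a^{\ast}a$ and exhibits $\bigl(t(a^{\ast})^{n-1}\bigr)^{\ast}=a^{n-1}t^{\ast}$ as a $\{1,3\}$-inverse of $a$. Its $\{1\}$-relation reads $a\cdot a^{n-1}t^{\ast}\cdot a=a$, i.e. $a^{n}t^{\ast}a=a$, so $a\in a^{n}\mathcal{R}$ and hence $a\mathcal{R}=a^{n}\mathcal{R}$; together these give $(ii)$.

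I expect the only delicate point to be precisely this $(ii)\Leftrightarrow(iii)$ bridge: the mechanism that converts two separate one-sided conditions into a single $n$-th power of $a^{\ast}$ on the left. Everything hinges on the observation that passing to adjoints turns the right-hand membership $a\in a^{n}\mathcal{R}$ into $a^{\ast}\in\mathcal{R}(a^{\ast})^{n}$, after which one substitution inside $a=sa^{\ast}a$ supplies the power; symmetrically, in the reverse direction the factorization $a=\bigl(t(a^{\ast})^{n-1}\bigr)a^{\ast}a$ lets Remark \ref{rem32} deliver both halves of $(ii)$ simultaneously. All remaining steps are routine one-line inclusions or direct citations of Theorem \ref{the31} and Remark \ref{rem32}.
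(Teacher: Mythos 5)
Your proposal is correct and follows essentially the same route as the paper: $(i)\Leftrightarrow(ii)$ and $(i)\Leftrightarrow(iv)$ by translating the range conditions through Remark \ref{rem32} and citing Theorem \ref{the31}, and a direct substitution argument for $(ii)\Leftrightarrow(iii)$, with your $(ii)\Rightarrow(iii)$ step ($a=sa^{\ast}a$, $a^{\ast}=t^{\ast}(a^{\ast})^{n}$, substitute) being word-for-word the paper's. The only cosmetic difference is in $(iii)\Rightarrow(ii)$, where you invoke Remark \ref{rem32} with $t(a^{\ast})^{n-1}$ to obtain the $\{1,3\}$-inverse $a^{n-1}t^{\ast}$ and hence $a=a^{n}t^{\ast}a$, whereas the paper reaches the same identity by the telescoping computation $t(a^{\ast})^{n}=a^{n}t^{\ast}$; both are valid and of equal weight.
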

\begin{proof}
$(i) \Leftrightarrow (ii)$ By Theorem \ref{the31} and Remark \ref{rem32}.

$(ii) \Rightarrow (iii)$ By hypothesis $a\mathcal{R}=a^{n}\mathcal{R}$, we have $a=a^{n}t$ for some $t\in \mathcal{R}$.
Then $a^{\ast}=t^{\ast}(a^{\ast})^{n}$. Since $\mathcal{R}a=\mathcal{R}a^{\ast}a$, we get $a=sa^{\ast}a$ for some $s\in \mathcal{R}$, and
$a=st^{\ast}(a^{\ast})^{n}a$. This means that $\mathcal{R}a=\mathcal{R}(a^{\ast})^{n}a$.

$(iii) \Rightarrow (ii)$ Using assumptions, we have $a=t(a^{\ast})^{n}a$ for some $t\in \mathcal{R}$,
then $a^{\ast}=a^{\ast}a^{n}t^{\ast}$.
It gives that $t(a^{\ast})^{n}=t(a^{\ast})^{n-1}a^{\ast}=t(a^{\ast})^{n-1}(a^{\ast}a^{n}t^{\ast})=[t(a^{\ast})^{n}a]a^{n-1}t^{\ast}=a^{n}t^{\ast}$.
This implies that $a=a^{n}t^{\ast}a$.
Hence, $\mathcal{R}a=\mathcal{R}(a^{\ast})^{n}a$ implies that $\mathcal{R}a=\mathcal{R}a^{\ast}a$ and $a\mathcal{R}=a^{n}\mathcal{R}$.

$(i) \Rightarrow (iv)$ By Theorem \ref{the31} and previous equivalences.
\end{proof}

Note that $a^{\ast}$ is left $(a, a^{\ast})$ invertible, if $\mathcal{R}a=\mathcal{R}(a^{\ast})^{2}a$.
Then by Corollary \ref{cor33}, we can obtain the following result.

\begin{corollary} Let $a\in R$. Then
$a$ is right $(a, a^{\ast})$ invertible if and only if $a^{\ast}$ is left $(a, a^{\ast})$ invertible.
\end{corollary}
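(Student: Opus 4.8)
The plan is to unwind both one-sided $(a,a^{\ast})$-invertibility conditions into ideal-membership statements and then observe that both collapse, via the definition of right core inverse on one side and via Corollary \ref{cor33} on the other, to the single identity $\mathcal{R}a=\mathcal{R}(a^{\ast})^{2}a$. Since everything is bookkeeping of the roles of $b$ and $c$ in the one-sided definitions, I expect no genuinely hard step; the only point requiring care is keeping the order of factors straight, because the involution is an anti-isomorphism and the two notions really are one-sided (a left-ideal condition versus a right-ideal condition).

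First I would dispose of the left-hand side of the equivalence. By Drazin's definition of right $(b,c)$-invertibility applied with $b=a$ and $c=a^{\ast}$, the statement ``$a$ is right $(a,a^{\ast})$-invertible'' means $a^{\ast}\in a^{\ast}\cdot a\cdot a\cdot\mathcal{R}=a^{\ast}a^{2}\mathcal{R}$. This is precisely the defining condition for $a$ to be right core invertible recalled in the Introduction, so this half is essentially definitional.

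Next I would translate the right-hand side. Reading the definition of left $(b,c)$-invertibility with the ambient element taken to be $a^{\ast}$ and with $b=a$, $c=a^{\ast}$, the condition ``$a^{\ast}$ is left $(a,a^{\ast})$-invertible'' reads $a\in\mathcal{R}\,a^{\ast}\,a^{\ast}\,a=\mathcal{R}(a^{\ast})^{2}a$. Because $\mathcal{R}(a^{\ast})^{2}a\subseteq\mathcal{R}a$ holds trivially, the membership $a\in\mathcal{R}(a^{\ast})^{2}a$ is equivalent to the equality $\mathcal{R}a=\mathcal{R}(a^{\ast})^{2}a$; this is exactly the remark recorded immediately before the statement.

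Finally I would link the two sides through Corollary \ref{cor33}, whose equivalence of $(i)$ and $(iii)$ in the case $n=2$ asserts that $a$ is right core invertible if and only if $\mathcal{R}a=\mathcal{R}(a^{\ast})^{2}a$. Chaining the three reductions gives $a$ right $(a,a^{\ast})$-invertible $\Leftrightarrow$ $a$ right core invertible $\Leftrightarrow$ $\mathcal{R}a=\mathcal{R}(a^{\ast})^{2}a$ $\Leftrightarrow$ $a^{\ast}$ left $(a,a^{\ast})$-invertible, which is the assertion of the corollary.
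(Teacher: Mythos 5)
Your proposal is correct and follows essentially the same route as the paper: the paper likewise obtains the corollary by noting that left $(a,a^{\ast})$-invertibility of $a^{\ast}$ unwinds to $a\in\mathcal{R}(a^{\ast})^{2}a$, i.e.\ to $\mathcal{R}a=\mathcal{R}(a^{\ast})^{2}a$, and then invoking the equivalence $(i)\Leftrightarrow(iii)$ of Corollary \ref{cor33} with $n=2$. You have merely spelled out the definitional translations that the paper leaves implicit.
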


\begin{remark}
We find $\mathcal{R}a=\mathcal{R}(a^{\ast})^{n}a$ for any $n \geq 2$, it implies that
$a^{\ast}$ is left $(a, a^{\ast})$ invertible if and only if $(a^{\ast})^{n}$ is left $(a, a^{\ast})$ invertible.
In fact, it is easy to obtain that $a$ is right $(a, a^{\ast})$ invertible if and only if $a^{n}$ is right $(a, a^{\ast})$ invertible.
\end{remark}

\section{ The related generalized core inverses}

In this section, some new characterizations of (generalized) core inverses are given.
Through these characterizations, we can clearly find the relationship between these generalized inverses.
Moreover, we will provide a relation schema of several kinds of (generalized one-sided) core inverses.

\begin{theorem}\label{the41}
Let  $a\in \mathcal{R}$ and $k \geq 1$. Then the following are equivalent:

$(i)$  $a$ is core invertible;

$(ii)$ there exists $x\in \mathcal{R}$ such that $xa^{2}=a$, $x^{k}=ax^{k+1}$ and $(a^kx^k)^{\ast}=a^kx^k$;

$(iii)$ there exists $x\in\mathcal{R}$ such that $xa^{2}=a$, $x^{k}=ax^{k+1}$ and $(ax)^{\ast}=ax$.
\end{theorem}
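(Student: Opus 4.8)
The plan is to combine the cited equivalence $a\in\mathcal{R}^{\tiny{\textcircled{\tiny\#}}}\Leftrightarrow a\in\mathcal{R}^{\sharp}\cap\mathcal{R}^{\{1,3\}}$ with the five defining equations of the core inverse recalled in the introduction. For $(i)\Rightarrow(ii)$ and $(i)\Rightarrow(iii)$ I would take $x=a^{\tiny{\textcircled{\tiny\#}}}$, which satisfies $xa^2=a$, $ax^2=x$, $axa=a$ and $(ax)^*=ax$. From $ax^2=x$ one gets $ax^{k+1}=(ax^2)x^{k-1}=x\cdot x^{k-1}=x^k$, so the middle equation of (ii) and (iii) holds; and a descending induction using $ax\cdot x=ax^2=x$ and $axa=a$ gives $a^{j}x^{j}=a^{j-1}(ax)x^{j-1}=a^{j-1}x^{j-1}$, whence $a^kx^k=ax$. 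Therefore both $(ax)^*=ax$ and $(a^kx^k)^*=a^kx^k$ are satisfied, settling the two forward implications with the same explicit $x$.

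The reverse implications rest on the two equations common to (ii) and (iii), namely $xa^2=a$ and $x^k=ax^{k+1}$, from which I would first deduce group invertibility. Iterating the first gives $x^{j}a^{j+1}=a$ for all $j\ge0$, while right-multiplying the second by powers of $x$ gives $x^k=a^{m}x^{k+m}$ for all $m\ge0$. Substituting $x^k=ax^{k+1}$ into $a=x^ka^{k+1}$ yields $a=ax^{k+1}a^{k+1}=a(x\cdot x^ka^{k+1})=axa$, so $axa=a$ and $ax$ is idempotent. Now $a=xa^2\in\mathcal{R}a^2$, and the crucial point is the telescoping identity $ax^ka^k=a$: since $xa^{j}=a^{j-1}$ for $j\ge2$, one reduces $ax^ka^k=ax^{k-1}a^{k-1}=\cdots=axa=a$. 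Because $ax^k=a(ax^{k+1})=a^2x^{k+1}\in a^2\mathcal{R}$, this forces $a=ax^ka^k\in a^2\mathcal{R}$; combined with $a\in\mathcal{R}a^2$ and the well-known fact that $a\in\mathcal{R}^{\sharp}\Leftrightarrow a\in a^2\mathcal{R}\cap\mathcal{R}a^2$, I conclude $a\in\mathcal{R}^{\sharp}$.

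It then remains to supply a $\{1,3\}$-inverse. For $(iii)\Rightarrow(i)$ this is immediate: $axa=a$ together with $(ax)^*=ax$ says exactly that $x$ is a $\{1,3\}$-inverse of $a$, so $a\in\mathcal{R}^{\sharp}\cap\mathcal{R}^{\{1,3\}}=\mathcal{R}^{\tiny{\textcircled{\tiny\#}}}$. For $(ii)\Rightarrow(i)$ the Hermitian hypothesis concerns $p:=a^kx^k$ rather than $ax$, so instead I would show $p$ is a Hermitian idempotent with $pa=a$ lying in $a\mathcal{R}$. Using the telescoping $x^ka^k=xa$ and $a^k(xa)=a^{k-1}(axa)=a^k$ one obtains $p^2=a^k(x^ka^k)x^k=a^k(xa)x^k=a^kx^k=p$ and $a^kx^ka^k=a^k$; group invertibility gives $a=a^ks$ for some $s$, whence $pa=a^kx^ka^ks=a^ks=a$; and $p=a^kx^k=a(a^{k-1}x^k)\in a\mathcal{R}$. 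Writing $p=at$, the element $t$ satisfies $at=p=p^*$ and $ata=pa=a$, so $t$ is a $\{1,3\}$-inverse of $a$ and again $a\in\mathcal{R}^{\tiny{\textcircled{\tiny\#}}}$.

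The main obstacle is the right-hand membership $a\in a^2\mathcal{R}$: the two shared hypotheses are asymmetric, since $xa^2=a$ is a left-ideal statement whereas $x^k=ax^{k+1}$ is a right-ideal one, and straightforward substitutions only regenerate left-ideal information such as $a\in\mathcal{R}a^2$. The telescoping identity $ax^ka^k=a$ is precisely what converts the right-ideal hypothesis into $a\in a^2\mathcal{R}$, and it is the heart of the argument. A secondary difficulty, specific to $(ii)\Rightarrow(i)$, is that the given $x$ need not satisfy $(ax)^*=ax$, so it cannot serve as a $\{1,3\}$-inverse directly; one must instead read off a genuine $\{1,3\}$-inverse from the Hermitian idempotent $a^kx^k$ as above.
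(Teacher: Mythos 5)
Your proposal is correct, but it takes a genuinely different route from the paper. The paper proves $(ii)\Rightarrow(i)$ and $(iii)\Rightarrow(i)$ by writing down explicit candidates --- $z=a^{k-1}x^{k}$ for $(ii)$ and $z=xax$ for $(iii)$ --- and verifying directly the three-equation characterization $za^{2}=a$, $az^{2}=z$, $(az)^{\ast}=az$ of the core inverse from \cite{SZX1}, which has the advantage of producing the formula $a^{\tiny{\textcircled{\tiny\#}}}=a^{k-1}x^{k}$ (resp.\ $xax$) as a by-product. You instead route everything through the decomposition $\mathcal{R}^{\tiny{\textcircled{\tiny\#}}}=\mathcal{R}^{\sharp}\cap\mathcal{R}^{\{1,3\}}$: your derivation of $axa=a$ from $a=x^{k}a^{k+1}=(ax^{k+1})a^{k+1}$, the telescoping identity $ax^{k}a^{k}=a$ converting the right-ideal hypothesis into $a\in a^{2}\mathcal{R}$, and the extraction of a $\{1,3\}$-inverse from the Hermitian idempotent $p=a^{k}x^{k}$ are all valid (and the element $t$ with $p=at$ that you produce is in fact the paper's $a^{k-1}x^{k}$, just certified as a $\{1,3\}$-inverse rather than as the core inverse itself). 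The trade-off is that your argument leans on two external facts the paper's computation avoids --- the characterization $a\in\mathcal{R}^{\sharp}\Leftrightarrow a\in a^{2}\mathcal{R}\cap\mathcal{R}a^{2}$ and the identification $\mathcal{R}^{\tiny{\textcircled{\tiny\#}}}=\mathcal{R}^{\sharp}\cap\mathcal{R}^{\{1,3\}}$ --- both of which are standard and indeed quoted in the introduction, so this is a legitimate stylistic choice rather than a gap; it is arguably more transparent about \emph{why} the hypotheses suffice, at the cost of not exhibiting the core inverse explicitly.
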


\begin{proof} If $k=1$, then it is clear that $(i)\Leftrightarrow(ii)\Leftrightarrow(iii).$ Let us assume that $k\geq 2$ in the rest of the proof.

$(i)\Rightarrow(ii)$-$(iii)$
Let $x=a^{\tiny{\textcircled{\tiny \#}}}$.
Then we obtain $xa^{2}=a$, $x=ax^{2}$ and $(ax)^{\ast}=ax$.
For $k\geq 2$, it is easy to get $ax^{k+1}=ax^{2}x^{k-1}=x^{k}$.
Note that $ax=a(ax^2)=a^2x^2=a^{2}(ax^{2})x=\dots=a^kx^k$. Hence, $(a^kx^k)^{\ast}=a^kx^k$.

$(ii)\Rightarrow(i)$ Set $z=a^{k-1}x^k$. Then $za^2=a^{k-1}x^ka^2=a^{k-1}x^{k-1}(xa)a=a^{k-1}x^{k-1}(x^ka^{k})a=a^{k-1}x^{2k-1}a^{k+1}=x^{k}a^{k+1}=a$ and $az^2=aa^{k-1}x^ka^{k-1}x^k=a^{k}x^ka^{k-1}x^k=a^{k}x^{2k}a^{2k-1}x^k=x^{k}a^{2k-1}x^k=(x^{k}a^{k})a^{k-1}x^k=xaa^{k-1}x^k=xa^kx^k=a^{k-1}x^k=z$. Note that
$az=a^kx^k$, thus $(az)^*=az$. Hence $a^{\tiny{\textcircled{\tiny \#}}}=z=a^{k-1}x^k$.

$(iii)\Rightarrow(i)$  Note that $a=xa^{2}=x(xa^{2})a=x^{2}a^{3}=x^{2}(xa^{2})a^{2}=x^{3}a^{4}=\cdots=x^{n}a^{n+1}$.
Write $z=xax$. Then $az=axax=ax(x^{k}a^{k+1})x=(ax^{k+1})a^{k+1}x=x^{k}a^{k+1}x=ax$, it gives that $(az)^{\ast}=az$.
It is easy to get $za^{2}=xa(xa^{2})=xa^{2}=a$.
Moreover, $az^{2}=axxax=axx(x^{k}a^{k+1})x=ax^{k+1}xa^{k+1}x=x^{k+1}a^{k+1}x=xax=z$.
This implies that $a^{\tiny{\textcircled{\tiny \#}}}=xax$.
\end{proof}

\begin{proposition}\label{prop42}
Let  $a, x\in \mathcal{R}$ and $k \geq 1$. Then the following are equivalent:

$(i)$ $x$ is the core inverse of $a$;

$(ii)$ $xa^{2}=a$,  $xax=x$, $(ax)^{\ast}=ax$ and $x^{k}=ax^{k+1}$;

$(iii)$ $xa^{2}=a$,  $x^{k+1}a^{k+1}x=x$, $(x^{k}a^{k+1}x)^{\ast}=x^{k}a^{k+1}x$ and $x^{k}=x^ka^{k+1}x^{k+1}$.
\end{proposition}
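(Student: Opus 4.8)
The plan is to route both nontrivial conditions through condition (ii): I will use Theorem \ref{the41} to tie (ii) to the genuine core inverse, and one elementary telescoping identity to tie (ii) to (iii). The cycle I intend to close is $(i)\Rightarrow(ii)\Rightarrow(iii)\Rightarrow(i)$, with the passage $(ii)\Leftrightarrow(iii)$ handled symmetrically so that no separate $k=1$ base case is needed.

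First I would record the key computational fact that follows from the single relation $xa^{2}=a$: by induction $xa^{m}=a^{m-1}$ for every $m\ge 2$, and hence $x^{m}a^{m}=x^{m-1}(xa^{m})=x^{m-1}a^{m-1}=\cdots=xa$ for every $m\ge 1$. This identity $x^{m}a^{m}=xa$ is the engine of the whole argument, since it collapses the $k$-fold powers appearing in (iii) down to the low-order expressions in (ii), and crucially it uses only $xa^{2}=a$, which is common to (ii) and (iii). For $(i)\Rightarrow(ii)$, if $x=a^{\tiny{\textcircled{\tiny\#}}}$ then $xa^{2}=a$, $xax=x$ and $(ax)^{\ast}=ax$ are immediate from the defining equations of the core inverse recalled in the introduction, while $ax^{k+1}=(ax^{2})x^{k-1}=x^{k}$ follows from $x=ax^{2}$.

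For $(ii)\Leftrightarrow(iii)$ I would substitute the telescoping identity into each relation of (iii). Using $x^{k+1}a^{k+1}=xa$ gives $x^{k+1}a^{k+1}x=(xa)x=xax$, so the first relation of (iii) matches $xax=x$. Next $x^{k}a^{k+1}x=(x^{k}a^{k})(ax)=(xa)(ax)=xa^{2}x=ax$, identifying the self-adjointness condition of (iii) with $(ax)^{\ast}=ax$. Finally $x^{k}a^{k+1}x^{k+1}=(x^{k}a^{k})(ax^{k+1})=xa^{2}x^{k+1}=ax^{k+1}$, so the last relation of (iii) matches $x^{k}=ax^{k+1}$. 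Reading these three equalities in either direction shows that, under the shared hypothesis $xa^{2}=a$, the remaining three relations of (ii) hold if and only if those of (iii) do. For $(iii)\Rightarrow(i)$ I first pass to (ii) as above; then its relations $xa^{2}=a$, $x^{k}=ax^{k+1}$ and $(ax)^{\ast}=ax$ are exactly the hypotheses of Theorem \ref{the41}$(iii)$, so $a$ is core invertible and that theorem's proof yields $a^{\tiny{\textcircled{\tiny\#}}}=xax$; the extra relation $xax=x$ from (ii) then forces $a^{\tiny{\textcircled{\tiny\#}}}=x$, i.e. $x$ is the core inverse of $a$.

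The only real obstacle is bookkeeping in the telescoping step: one must verify that every simplification invokes only $xa^{2}=a$ (to justify $x^{m}a^{m}=xa$ and $xa^{m}=a^{m-1}$) and never tacitly assumes $ax^{2}=x$, so that the equivalence of (ii) and (iii) is genuinely derived from the stated hypotheses rather than from the conclusion. Once that identity is secured, each of the four relations in (iii) reduces by a one-line substitution to its counterpart in (ii), and the remaining implications are routine.
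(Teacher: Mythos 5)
Your proposal is correct and follows essentially the same route as the paper: both proofs rest on the telescoping identity $x^{m}a^{m}=xa$ (equivalently $a=x^{k}a^{k+1}$) derived solely from $xa^{2}=a$, which collapses every relation in (iii) to its counterpart in (ii). The only difference is cosmetic: to close the cycle you invoke Theorem \ref{the41}$(iii)$ to get $a^{\tiny{\textcircled{\tiny\#}}}=xax=x$, whereas the paper proves $(ii)\Rightarrow(i)$ by directly computing $ax^{2}=x$ from $x=xax=x^{k}a^{k}x$ and then appealing to the three-equation characterization of the core inverse; both are valid and rely on the same underlying computation.
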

\begin{proof}
$(i)\Rightarrow(ii)$ It is clear.

$(ii)\Rightarrow(i)$ It suffices to prove $ax^2=x$. Indeed,
by $xa^{2}=a$, we get $xa=x(xa^{2})=x^{2}a^{2}=\cdots=x^{n}a^{n}$,
thus, $x=xax=x^ka^kx=ax^{k+1}a^kx=ax(x^{k}a^k)x=axxax=ax^2$.

$(ii)\Leftrightarrow(iii)$ Because $a=xa^{2}$ implies $a=x^ka^{k+1}$, this equivalence is obvious.
\end{proof}

In the following result, we will change the condition $(ax)^{\ast}=ax$ in Theorem \ref{the41} into $(xa)^{\ast}=xa$.
It is interesting to find that $a$ is not only core invertible but also EP.

\begin{theorem} \label{the43}
Let $a\in \mathcal{R}$ and $k \geq 1$. Then the following are equivalent:

$(i)$ $a$ is EP;

$(ii)$ there exists $x\in \mathcal{R}$ such that $xa^{2}=a$, $(xa)^{\ast}=xa$ and $x^{k}=ax^{k+1}$;

$(ii)$ there exists $x\in \mathcal{R}$ such that $xa^{2}=a$, $(x^{k+1}a^{k+1})^{\ast}=x^{k+1}a^{k+1}$ and $x^{k}=x^{k}a^{k+1}x^{k+1}$.
\end{theorem}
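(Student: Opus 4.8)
The plan is to treat the equivalence $(ii)\Leftrightarrow(iii)$ as essentially formal and to concentrate the real work on $(i)\Leftrightarrow(ii)$. The starting observation is that the single hypothesis $xa^2=a$ already forces $xa^{m+1}=a^m$ for every $m\geq1$, and hence, by an immediate induction, $x^ja^j=xa$ for every $j\geq1$. In particular $x^{k+1}a^{k+1}=xa$, so the requirement $(x^{k+1}a^{k+1})^\ast=x^{k+1}a^{k+1}$ in $(iii)$ is literally the same as $(xa)^\ast=xa$ in $(ii)$. Moreover $x^ka^{k+1}=(x^ka^k)a=xa^2=a$, whence $x^ka^{k+1}x^{k+1}=ax^{k+1}$; thus the relation $x^k=x^ka^{k+1}x^{k+1}$ of $(iii)$ coincides with $x^k=ax^{k+1}$ of $(ii)$. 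This reduces the theorem to proving $(i)\Leftrightarrow(ii)$.

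For $(i)\Rightarrow(ii)$ I would simply take $x=a^\dag=a^\sharp$ (legitimate since $a$ is EP). Then $xa^2=a^\sharp a^2=a$ and $(xa)^\ast=(a^\dag a)^\ast=a^\dag a=xa$, while $ax^{k+1}=a(a^\sharp)^{k+1}=(a(a^\sharp)^2)(a^\sharp)^{k-1}=(a^\sharp)^k=x^k$, using only the defining identities of the group and Moore--Penrose inverses. This step is routine.

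The substance is $(ii)\Rightarrow(i)$. Writing $e:=xa$, I would first extract $axa=a$: post-multiplying $x^k=ax^{k+1}$ by $a^{k+1}$ and using $x^ka^{k+1}=a$ together with $x^{k+1}a^{k+1}=e$ gives $a=ae=axa$; combined with $ea=xa^2=a$ this yields $ea=ae=a$, and then $e^2=x(axa)=xa=e$, so $e$ is an idempotent, self-adjoint by the hypothesis $(xa)^\ast=xa$. Next I would locate $e$ on both sides: from $x^k=ax^{k+1}$ one gets $e=x^ka^k=a(x^{k+1}a^k)\in a\mathcal{R}$, while trivially $e=xa\in\mathcal{R}a$. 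The membership $e\in a\mathcal{R}$ together with $ae=a$ gives $a=a^2(x^{k+1}a^k)\in a^2\mathcal{R}$, and $a=xa^2\in\mathcal{R}a^2$, so $a\in\mathcal{R}^\sharp$ is group invertible.

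It remains to recognize $e$ as $aa^\sharp$ and thereby convert its self-adjointness into EP-ness, which is the step I expect to be the main obstacle. From $ea=a$ one has $eaa^\sharp=aa^\sharp$, whereas writing $e=xa\in\mathcal{R}a$ and using the group-inverse identity $a\cdot aa^\sharp=a$ gives $eaa^\sharp=xa\cdot aa^\sharp=xa=e$; comparing the two yields $e=aa^\sharp$. Since $e$ is self-adjoint, $(aa^\sharp)^\ast=aa^\sharp$, and because $aa^\sharp=a^\sharp a$ the element $a^\sharp$ then satisfies all four Moore--Penrose equations, so $a^\sharp=a^\dag$ and $aa^\dag=a^\dag a$; that is, $a$ is EP. The only delicate points are keeping track of which one-sided identities are available at each stage—they all flow from $xa^2=a$ and $x^k=ax^{k+1}$ alone—and applying the group-inverse identities $a^2a^\sharp=a=a\cdot aa^\sharp$ correctly in the identification of $e$.
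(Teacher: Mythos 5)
Your proof is correct and follows essentially the same route as the paper: take $x=a^\dagger=a^\sharp$ for $(i)\Rightarrow(ii)$, reduce $(iii)$ to $(ii)$ via the identities $x^ja^j=xa$ and $x^ka^{k+1}=a$ forced by $xa^2=a$, and for $(ii)\Rightarrow(i)$ show $a$ is group invertible with $aa^\sharp=xa$ Hermitian. The only cosmetic difference is that the paper exhibits the explicit group inverse $a^\sharp=x^2a$ directly, whereas you infer group invertibility from $a\in a^2\mathcal{R}\cap\mathcal{R}a^2$ and then identify $e=xa$ with $aa^\sharp$ abstractly; both verifications are sound.
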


\begin{proof}
$(i)\Rightarrow(ii)$ If $a$ is EP, then $a \in \mathcal{R}^{\dag} \cap \mathcal{R}^{\sharp}$ with $a^{\sharp} = a^{\dag}$. Take $x=a^{\dag}=a^{\sharp}$.

$(ii)\Rightarrow(i)$ From $xa^2=a$ and $a=xa^2=x^ka^{k+1}=ax^{k+1}a^{k+1}=ax^{k}a^{k}=a^2x^{k+1}a^{k}=a^2x^2a$, it follows that $a^{\sharp}$ exists with $a^{\sharp}=x^2a$. Since $aa^{\sharp}=ax^2a=ax^{k+1}a^k=x^ka^k=xa$. Thus, $(aa^{\sharp})^{\ast}=aa^{\sharp}$. Hence, $a$ is EP.

$(ii)\Leftrightarrow(iii)$ This part is clear.
\end{proof}

In \cite{GC}, Gao and Chen introduced the concept of pseudo core inverse in $\ast$-rings.
\begin{definition}\emph{\cite{GC}}
Let $a\in R$. The pseudo core inverse  of $a$, denoted by $a^{\scriptsize{\textcircled{\tiny D}}}$, is the unique solution to  system
\begin{center}
$xa^{k+1} = a^{k}$ for some  $k\geq 1$, $ax^{2} = x$ and  $(ax)^{\ast} = ax$.
\end{center}
\end{definition}
The smallest positive integer $k$ satisfying above equations is called the pseudo core index of $a$. If $a$ is  pseudo core invertible, then it is Drazin invertible and its pseudo core index  equals to index, see  \cite{GC}. Here and subsequently,  the pseudo core index  is denoted by $\text{ind}(a)$.

In the following, we propose the notion of one-sided pseudo core inverse in a $\ast$-ring $\mathcal{R}$ as a generalized one-sided core inverse, after which,
some characterizations of right pseudo core inverses are given.

\begin{definition}
Let  $a\in \mathcal{R}$. Then
$a$ is called right pseudo core invertible if there exist $x\in \mathcal{R}$ and positive integer $k$ such that $axa^k=a^{k}$, $x=ax^{2}$ and $(ax)^{\ast}=ax$.
\end{definition}
We use the symbol $a^{\scriptsize{\textcircled{\tiny D}}}_{r}$ to denote the right pseudo core inverse of $a$, when $a$ is right pseudo core invertible. Before we characterize right pseudo core invertible elements, a characterization of pseudo core invertible elements is presented.

\begin{lemma}\emph{\cite[Theorem 2.3]{GC}}\label{GC}
Let  $a\in \mathcal{R}$ and $k \geq 1$. Then the following are equivalent:

$(i)$ $a$ is pseudo core invertible with pseudo core index $k$;

$(ii)$ $a^{k} \in \mathcal{R}^{\{1,3\}}$ and $a \in \mathcal{R}^{D}$ with index $k$.

In this case, $a^{\scriptsize{\textcircled{\tiny D}}}=a^Da^k(a^k)^{(1,3)}.$
\end{lemma}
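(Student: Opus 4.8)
The plan is to prove the equivalence by establishing the two implications separately and then reading off the formula $a^{\scriptsize{\textcircled{\tiny D}}}=a^Da^k(a^k)^{(1,3)}$ from the uniqueness built into the definition. Throughout I would work from the defining equations of the pseudo core inverse, $xa^{k+1}=a^k$, $ax^2=x$, $(ax)^\ast=ax$, and I would first squeeze out of $ax^2=x$ alone the elementary book-keeping identities that drive everything: iterating gives $x=a^nx^{n+1}$ for all $n\geq 1$, hence $ax=a^nx^n$ for all $n\geq 1$ (so in particular $ax=a^kx^k=a^{k+1}x^{k+1}$), and likewise $ax^{k+1}=x^k$. These let me replace the awkward powers by the single Hermitian element $ax$.

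For $(i)\Rightarrow(ii)$, the key observation I would isolate first is the identity $axa^k=a^k$: substituting $a^k=xa^{k+1}$ into $axa^k$ and using $ax^2=x$ collapses $ax(xa^{k+1})=ax^2a^{k+1}=xa^{k+1}=a^k$. Once this is in hand the rest is routine. Using $a^kx^k=ax$, I get $a^kx^ka^k=(ax)a^k=axa^k=a^k$ and $(a^kx^k)^\ast=(ax)^\ast=ax=a^kx^k$, so $x^k$ is a $\{1,3\}$-inverse of $a^k$ and $a^k\in\mathcal{R}^{\{1,3\}}$. For Drazin invertibility I would propose $a^D:=x^{k+1}a^k$ and check the three axioms: $a^{k+1}a^D=(a^{k+1}x^{k+1})a^k=(ax)a^k=a^k$ (index $\leq k$); commutativity $aa^D=a^Da=x^ka^k$, computed from $ax^{k+1}=x^k$ and $xa^{k+1}=a^k$; and $a^Daa^D=a^D$ via $a^kx^{k+1}=(ax)x=ax^2=x$.

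For $(ii)\Rightarrow(i)$ I would set $x:=a^Da^k(a^k)^{(1,3)}$ and exploit the idempotent $e:=aa^D=a^Da$, which commutes with $a$ and $a^D$ and satisfies $a^m(a^D)^m=e$ and $ea^k=a^{k+1}a^D=a^k$. Writing $w=(a^k)^{(1,3)}$, I would compute $ax=(aa^D)a^kw=ea^kw=a^kw$, so $(ax)^\ast=(a^kw)^\ast=a^kw=ax$ gives the third equation; then $xa^{k+1}=a^Da^k(wa^k)a=a^D(a^kwa^k)a=a^Da^{k+1}=a^k$ uses the $\{1\}$-property $a^kwa^k=a^k$; and finally $ax^2=(a^kw)(a^Da^kw)=(a^kwa^k a^D)w=(a^kwa^k)a^Dw=a^ka^Dw=a^Da^kw=x$, where I moved $a^D$ past $a^k$ by commutativity and again used $a^kwa^k=a^k$. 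This verifies all three defining equations, and since the pseudo core inverse is unique, $a^{\scriptsize{\textcircled{\tiny D}}}=x=a^Da^k(a^k)^{(1,3)}$.

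The one point requiring care — and what I expect to be the only real obstacle beyond the bracketed algebra — is matching the integer $k$ on both sides as a genuine index rather than just some exponent. The constructions above show only that the Drazin index and the pseudo core index each bound the other ($(i)$ yields $a^{k+1}a^D=a^k$, and $(ii)$ yields a pseudo core inverse satisfying $xa^{k+1}=a^k$), so I would finish by arguing the two minimal indices coincide, consistent with the remark that the pseudo core index equals $\mathrm{ind}(a)$; this makes the ``index $k$'' clauses in $(i)$ and $(ii)$ refer to the same number. Everything else is the routine substitution bookkeeping enabled by the two slick identities $axa^k=a^k$ and $ea^k=a^k$.
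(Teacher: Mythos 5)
This lemma is quoted from \cite[Theorem 2.3]{GC}; the paper itself gives no proof, so there is nothing internal to compare against. Judged on its own terms, your argument is essentially the standard one and the computations all check out: the identities $x=a^nx^{n+1}$, $ax=a^nx^n$, $ax^{k+1}=x^k$ and the key collapse $axa^k=ax(xa^{k+1})=xa^{k+1}=a^k$ are correct; $x^k$ is indeed a $\{1,3\}$-inverse of $a^k$; the candidate $a^D=x^{k+1}a^k$ satisfies all three Drazin axioms as you verify; and in the converse direction $x=a^Da^k(a^k)^{(1,3)}$ satisfies $xa^{k+1}=a^k$, $ax^2=x$, $(ax)^\ast=ax$ by exactly the manipulations you give (the commutation $a^Da^k=a^ka^D$ and $a^kwa^k=a^k$ are all that is needed). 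The uniqueness clause in the definition then yields the displayed formula.

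The one genuine gap is the point you yourself flag but do not close: matching the two minimal indices. Your two constructions only give ``pseudo core index $\geq$ Drazin index'' (from $a^{k+1}a^D=a^k$) and ``pseudo core index $\leq$ Drazin index'' \emph{at the level where $(ii)$ is assumed}, and appealing to ``the remark that the pseudo core index equals $\mathrm{ind}(a)$'' is circular here, since that remark is itself a consequence of this lemma. To close it, suppose in $(i)\Rightarrow(ii)$ that the Drazin index were some $j<k$. Then $a^j\mathcal{R}=a^k\mathcal{R}$ (both equal $aa^D\mathcal{R}$ up to the obvious inclusions $a^j=a^k(a^D)^{k-j}$ and $a^k=a^{k-j}a^j$), and $\{1,3\}$-invertibility transfers between generators of the same right ideal: if $b=ct$, $c=bs$ and $b\in\mathcal{R}^{\{1,3\}}$, then $cc'=bb^{(1,3)}$ for $c'=tb^{(1,3)}$, which is Hermitian and satisfies $cc'c=c$. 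Hence $a^j\in\mathcal{R}^{\{1,3\}}$, and running your $(ii)\Rightarrow(i)$ construction at level $j$ produces a solution of the defining system with exponent $j<k$, contradicting minimality of the pseudo core index. The same two inequalities give index exactly $k$ in the other direction. With that one extra lemma inserted, your proof is complete.
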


\begin{theorem} Let $a\in \mathcal{R}$ and $k \geq 1$. Then the following are equivalent:

$(i)$ $a$ is  pseudo core invertible;

$(ii)$ there exists $x\in\mathcal{R}$ such that $xa^{k+1}=a^k$, $ax^{k+1}=x^k$ and $(a^kx^{k})^*=a^kx^{k}$;

$(iii)$ there exists $x\in\mathcal{R}$ such that $a^kx^{k+1}a^{k+1}=a^k$, $ax^{2}=x$ and $(a^{k+1}x^{k+1})^*=a^{k+1}x^{k+1}$.
\end{theorem}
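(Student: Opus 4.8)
The plan is to read the two ``forward'' implications directly off the pseudo core inverse, to settle $(iii)\Rightarrow(i)$ by recognizing the given $x$ as the pseudo core inverse itself, and to reduce $(ii)\Rightarrow(i)$ to Lemma \ref{GC} by manufacturing a $\{1,3\}$-inverse of $a^{k}$ together with the Drazin invertibility of $a$. For $(i)\Rightarrow(ii)$ and $(i)\Rightarrow(iii)$ I set $x=a^{\scriptsize{\textcircled{\tiny D}}}$ and use the two identities that follow from $ax^{2}=x$ by a one-line induction: $x=a^{j}x^{j+1}$ and $ax=a^{j}x^{j}$ for all $j\geq 1$. Taking $j=k$ gives $a^{k}x^{k}=ax$ and $a^{k}x^{k+1}=x$; combined with the defining relation $xa^{k+1}=a^{k}$ and with $ax^{k+1}=(ax^{2})x^{k-1}=x^{k}$, these immediately yield the three equations of (ii). Likewise $a^{k}x^{k+1}a^{k+1}=xa^{k+1}=a^{k}$ together with $a^{k+1}x^{k+1}=a(a^{k}x^{k+1})=ax$ yields the three equations of (iii), the self-adjointness being inherited in each case from $(ax)^{\ast}=ax$.

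The implication $(iii)\Rightarrow(i)$ is the cleanest. Assuming $ax^{2}=x$, the identity $x=a^{k}x^{k+1}$ collapses the first equation to $xa^{k+1}=(a^{k}x^{k+1})a^{k+1}=a^{k}$, and $a^{k+1}x^{k+1}=a(a^{k}x^{k+1})=ax$ turns the self-adjointness hypothesis into $(ax)^{\ast}=ax$. Thus $x$ satisfies $xa^{k+1}=a^{k}$, $ax^{2}=x$ and $(ax)^{\ast}=ax$, i.e. $x$ is by definition the pseudo core inverse of $a$, so $a$ is pseudo core invertible.

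The implication $(ii)\Rightarrow(i)$ is where I expect the real difficulty, since the hypothesis supplies only the weak relation $ax^{k+1}=x^{k}$ in place of $ax^{2}=x$, so $x$ need not itself be the pseudo core inverse. Instead I verify the two ingredients of Lemma \ref{GC}. Iterating the hypotheses gives $a^{k}=x^{j}a^{k+j}$ and $x^{k}=a^{j}x^{k+j}$ for all $j\geq 1$; with $j=k$ this reads $a^{k}=x^{k}a^{2k}$ and $x^{k}=a^{k}x^{2k}$, and substituting these into $a^{k}x^{k}a^{k}$ gives
$$a^{k}x^{k}a^{k}=a^{k}x^{k}(x^{k}a^{2k})=a^{k}x^{2k}a^{2k}=(a^{k}x^{2k})a^{2k}=x^{k}a^{2k}=a^{k}.$$
Together with the hypothesis $(a^{k}x^{k})^{\ast}=a^{k}x^{k}$ this shows $x^{k}$ is a $\{1,3\}$-inverse of $a^{k}$, so $a^{k}\in\mathcal{R}^{\{1,3\}}$. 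For Drazin invertibility I observe $a^{k}=xa^{k+1}\in\mathcal{R}a^{k+1}$ at once, while $a^{k}=a^{k}x^{k}a^{k}=a^{k}(a^{k}x^{2k})a^{k}=a^{2k}x^{2k}a^{k}=a^{k+1}(a^{k-1}x^{2k}a^{k})\in a^{k+1}\mathcal{R}$, using $2k\geq k+1$. Hence $a^{k}\in a^{k+1}\mathcal{R}\cap\mathcal{R}a^{k+1}$, so $a\in\mathcal{R}^{D}$, and Lemma \ref{GC} yields that $a$ is pseudo core invertible.

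The subtle point in this last step, and the part I would treat most carefully, is the index matching in Lemma \ref{GC}: the argument only delivers $\text{ind}(a)\leq k$ and $a^{k}\in\mathcal{R}^{\{1,3\}}$, whereas the lemma is phrased for the pseudo core index. To be safe I would either deduce $a^{m}\in\mathcal{R}^{\{1,3\}}$ for $m=\text{ind}(a)$ from $\mathcal{R}a^{m}=\mathcal{R}a^{k}$ and the criterion $a^{m}\in\mathcal{R}^{\{1,3\}}\Leftrightarrow\mathcal{R}a^{m}=\mathcal{R}(a^{m})^{\ast}a^{m}$ of Remark \ref{rem32}, or else bypass the index question entirely by exhibiting the pseudo core inverse explicitly as $a^{D}a^{k}x^{k}$ and checking the three defining equations directly. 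I would also note that the forward implications implicitly require $k$ to be at least the pseudo core index, so the natural reading of the statement is for such $k$.
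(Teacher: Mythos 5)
Your proof is correct and follows essentially the same route as the paper: the decisive implication $(ii)\Rightarrow(i)$ rests on the identical computations $a^kx^ka^k=a^kx^{2k}a^{2k}=x^ka^{2k}=a^k$ and $a^k\in a^{k+1}\mathcal{R}\cap\mathcal{R}a^{k+1}$ followed by an appeal to Lemma \ref{GC}, and your direct treatment of $(iii)\Leftrightarrow(i)$ is just an unpacked version of the paper's observation that $ax^{2}=x$ forces $x=a^{k}x^{k+1}$. The two subtleties you flag --- that the forward implications need $k$ at least the pseudo core index, and that Lemma \ref{GC} as quoted demands index exactly $k$ while the argument only gives $\mathrm{ind}(a)\leq k$ --- are real gaps the paper glosses over, and your second repair (verifying directly that $a^{D}a^{k}x^{k}$ satisfies the three defining equations) is the cleaner of your two proposed fixes, since the route through Remark \ref{rem32} still requires an argument that $\{1,3\}$-invertibility passes from $a^{k}$ to $a^{m}$ along $a^{m}\mathcal{R}=a^{k}\mathcal{R}$.
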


\begin{proof} $(i)\Rightarrow(ii)$ It is clear.

$(ii)\Rightarrow(i)$  Note that $a^k\in \mathcal{R}a^{k+1}$ and $a^k=xa^{k+1}=x^ka^{2k}=ax^{k+1}a^{2k}=a^{k+1}x^{2k+1}a^{2k}\in a^{k+1}\mathcal{R}$,
thus $a$ is Drazin invertible with $a^D=x^{k+1}a^k$.
Again note that $a^kx^ka^k=a^kx^{2k}a^{2k}=x^ka^{2k}=a^k$ and $(a^kx^k)^*=a^kx^k$ imply that $x^k$ is a \{1,3\}-inverse of $a^k$.
In view of Lemma \ref{GC}, $a$ is pseudo core invertible with $a^{\scriptsize{\textcircled{\tiny D}}}=a^Da^k(a^k)^{(1,3)}=x^{k+1}a^ka^kx^k=a^{k-1}x^{k}$.

$(ii)\Leftrightarrow(iii)$ The equality $x=ax^{2}$ gives $x=a^kx^{k+1}$ and so the rest is clear.
\end{proof}

In the following result, we will reveal the relationship between right pseudo core inverses and right core inverses.
\begin{theorem}
Let $a\in \mathcal{R}$. Then the following are equivalent:

$(i)$ $a$ is right pseudo core invertible;

$(ii)$ $a^k$ is right core invertible for some positive integer $k$.
\end{theorem}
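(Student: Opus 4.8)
The plan is to prove both implications constructively, in each direction exhibiting an explicit candidate for the relevant right (pseudo) core inverse and checking the defining equations against Lemma~\ref{lem21}. The workhorse throughout is the relation $x=ax^{2}$: iterating it gives $x=a^{n}x^{n+1}$ for every $n\geq 1$, hence $ax=a^{n+1}x^{n+1}$ and $ax^{n+1}=x^{n}$. The analogous consequences of $y=a^{k}y^{2}$ will drive the converse, so I would begin by recording these identities once and reusing them.

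For $(i)\Rightarrow(ii)$, suppose $a$ is right pseudo core invertible, so there are $x$ and $k$ with $axa^{k}=a^{k}$, $x=ax^{2}$ and $(ax)^{\ast}=ax$. I claim $x^{k}$ is a right core inverse of $a^{k}$, which I verify against Lemma~\ref{lem21} applied to $a^{k}$. Taking $n=k-1$ above gives $ax=a^{k}x^{k}$, so $(a^{k}x^{k})^{\ast}=(ax)^{\ast}=ax=a^{k}x^{k}$. Next $a^{k}x^{k}a^{k}=(ax)a^{k}=axa^{k}=a^{k}$ is precisely the pseudo-core equation. Finally $a^{k}x^{2k}=(a^{k}x^{k})x^{k}=ax\,x^{k}=ax^{k+1}=x^{k}$, where the last step uses $ax^{2}=x$. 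Hence $a^{k}$ is right core invertible with right core inverse $x^{k}$.

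For $(ii)\Rightarrow(i)$, suppose $a^{k}$ is right core invertible with right core inverse $y$, so that $a^{k}ya^{k}=a^{k}$, $y=a^{k}y^{2}$ and $(a^{k}y)^{\ast}=a^{k}y$ by Lemma~\ref{lem21}. I would set $x=a^{k-1}y$. Then $ax=a^{k}y$ is self-adjoint and $axa^{k}=a^{k}ya^{k}=a^{k}$, so two of the three defining equations of a right pseudo core inverse hold at once; the remaining equation $x=ax^{2}$ is the crux. The key observation is that $e:=a^{k}y$ is a projection (indeed $e^{2}=a^{k}ya^{k}y=(a^{k}ya^{k})y=a^{k}y=e$) whose left action absorbs sufficiently high powers of $a$: from $a^{k}ya^{k}=a^{k}$ one gets $ea^{2k-1}=a^{k}ya^{k}a^{k-1}=a^{k}a^{k-1}=a^{2k-1}$. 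Rewriting $x=a^{k-1}y=a^{k-1}(a^{k}y^{2})=a^{2k-1}y^{2}$ via $y=a^{k}y^{2}$, I then compute $ax^{2}=(ax)x=e\,a^{k-1}y=e\,a^{2k-1}y^{2}=a^{2k-1}y^{2}=a^{k-1}y=x$, which finishes the converse.

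The hard part is thus the single equation $x=ax^{2}$ in $(ii)\Rightarrow(i)$: one cannot simply apply $e$ to $a^{k-1}y$, since $e$ only absorbs powers $a^{m}$ with $m\geq k$, so the trick is to promote $a^{k-1}y$ to $a^{2k-1}y^{2}$ first. Everything else reduces to the routine iteration of $x=ax^{2}$ and $y=a^{k}y^{2}$. I would also note that the integer $k$ produced in the converse need not be minimal, but this is harmless, since right pseudo core invertibility asks only for the existence of some positive $k$; if an index/formula statement were desired, it would follow the bookkeeping pattern of the preceding theorems.
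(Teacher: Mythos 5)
Your proof is correct and follows essentially the same route as the paper: it uses the same candidates, $x^{k}$ as a right core inverse of $a^{k}$ in one direction and $a^{k-1}y$ as a right pseudo core inverse of $a$ in the other, verified against Lemma~\ref{lem21}. The only difference is that you supply the verification of $x=ax^{2}$ in the converse (via the absorption identity $a^{k}ya^{2k-1}=a^{2k-1}$), which the paper explicitly omits.
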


\begin{proof}
$(i)\Rightarrow(ii)$ If $a$ is right pseudo core invertible, then we can check that $(a^{\scriptsize{\textcircled{\tiny D}}}_{r})^k$ is a right core inverse of $a^k$. Indeed, $a^k(a^{\scriptsize{\textcircled{\tiny D}}}_{r})^k=aa^{\scriptsize{\textcircled{\tiny D}}}_{r}$,
$a^k(a^{\scriptsize{\textcircled{\tiny D}}}_{r})^ka^k=aa^{\scriptsize{\textcircled{\tiny D}}}a^k=a^k$ and $a^k[(a^{\scriptsize{\textcircled{\tiny D}}}_{r})^k]^2=(a^{\scriptsize{\textcircled{\tiny D}}}_{r})^k$.

$(ii)\Rightarrow(i)$ If $a^k$ is right core invertible for some positive integer $k$, then we can check that $a^{k-1}(a^{k})^{\tiny{\textcircled{\tiny \#}}}_{r}$ is a right pseudo core inverse of $a$. Here we omit the details.
\end{proof}

In what follows, we give  characterizations of right pseudo core invertible elements.
\begin{theorem}\label{the47}
Let $a\in \mathcal{R}$. Then the following are equivalent:

$(i)$ $a$ is right pseudo core invertible;

$(ii)$ $a^{k+1}ya^k=a^k$ and $(a^{k+1}y)^*=a^{k+1}y$ for some $y\in \mathcal{R}$ and positive integer $k$;

$(iii)$ $a^{k}\in\mathcal{R}^{\{1,3\}}$ and $a^{k}\mathcal{R}=a^{k+1}\mathcal{R}$ for some positive integer $k$;

$(iv)$ $a^{k+1}x^{k+1}a^{k}=a^k$, $ax^{2}=x$ and $(a^{k+1}x^{k+1})^*=a^{k+1}x^{k+1}$ for some $x\in \mathcal{R}$ and positive integer $k$.
\end{theorem}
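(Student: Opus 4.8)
The plan is to use condition $(iii)$ and statement $(i)$ as the two hubs of the argument: I would prove $(i)\Leftrightarrow(iv)$ and $(ii)\Leftrightarrow(iii)$ directly, and link $(i)\Leftrightarrow(iii)$ through the machinery already developed for right core inverses. Throughout, the quantifier ``for some positive integer $k$'' is preserved, since in each equivalence the witness $k$ can be taken to be the same.

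First I would settle $(i)\Leftrightarrow(iv)$, which is essentially a rewriting. Both statements share the hypothesis $x=ax^{2}$, and this single identity forces $ax=a^{2}x^{2}=\dots=a^{k+1}x^{k+1}$ for every $k\geq 1$ (substitute $x=ax^{2}$ repeatedly into $ax$). Hence $ax$ and $a^{k+1}x^{k+1}$ are literally the same element, so $(ax)^{\ast}=ax$ is equivalent to $(a^{k+1}x^{k+1})^{\ast}=a^{k+1}x^{k+1}$, and $axa^{k}=a^{k}$ is equivalent to $a^{k+1}x^{k+1}a^{k}=a^{k}$. Comparing with the definition of right pseudo core invertibility, $(i)$ and $(iv)$ hold with the very same $x$ and $k$.

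Next I would prove $(i)\Leftrightarrow(iii)$ by reduction to the right core inverse theory. By the theorem immediately preceding this one, $a$ is right pseudo core invertible if and only if $a^{k}$ is right core invertible for some positive integer $k$. Applying Theorem \ref{the31} to the element $a^{k}$, the latter is equivalent to $a^{k}\in\mathcal{R}^{\{1,3\}}$ together with $a^{k}\mathcal{R}=(a^{k})^{2}\mathcal{R}=a^{2k}\mathcal{R}$. It then remains to observe that $a^{k}\mathcal{R}=a^{2k}\mathcal{R}$ is equivalent to the ideal condition $a^{k}\mathcal{R}=a^{k+1}\mathcal{R}$ appearing in $(iii)$: both are equivalent to $a^{k}\in a^{k+1}\mathcal{R}$, since if $a^{k}=a^{k+1}s$ then multiplying on the left by powers of $a$ yields $a^{k}\mathcal{R}=a^{k+1}\mathcal{R}=\dots=a^{2k}\mathcal{R}$, while $a^{k}=a^{2k}t$ gives $a^{k}=a^{k+1}(a^{k-1}t)$. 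This closes $(i)\Leftrightarrow(iii)$.

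Finally I would establish $(ii)\Leftrightarrow(iii)$ directly. For $(iii)\Rightarrow(ii)$, write $a^{k}=a^{k+1}s$ (from $a^{k}\mathcal{R}=a^{k+1}\mathcal{R}$) and set $y=s(a^{k})^{(1,3)}$; then $a^{k+1}y=a^{k}(a^{k})^{(1,3)}$ is Hermitian and $a^{k+1}ya^{k}=a^{k}(a^{k})^{(1,3)}a^{k}=a^{k}$. For the converse $(ii)\Rightarrow(iii)$, the identity $a^{k+1}ya^{k}=a^{k}$ immediately gives $a^{k}\in a^{k+1}\mathcal{R}$, hence $a^{k}\mathcal{R}=a^{k+1}\mathcal{R}$. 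The step I expect to be the crux is deducing $a^{k}\in\mathcal{R}^{\{1,3\}}$: using $(a^{k+1}y)^{\ast}=a^{k+1}y$ I would compute
\[
a^{k}=a^{k+1}ya^{k}=(a^{k+1}y)^{\ast}a^{k}=y^{\ast}(a^{\ast})^{k+1}a^{k}=y^{\ast}a^{\ast}(a^{k})^{\ast}a^{k},
\]
which shows $\mathcal{R}a^{k}=\mathcal{R}(a^{k})^{\ast}a^{k}$, and then invoke Remark \ref{rem32} (applied to $a^{k}$) to conclude $a^{k}\in\mathcal{R}^{\{1,3\}}$. This Hermitian-symmetry manipulation is the only genuinely non-routine point; everything else is bookkeeping with principal ideals and the shared identity $x=ax^{2}$.
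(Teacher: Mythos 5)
Your proposal is correct; every implication checks out, including the crux computation $a^{k}=y^{\ast}a^{\ast}(a^{k})^{\ast}a^{k}$ giving $\mathcal{R}a^{k}=\mathcal{R}(a^{k})^{\ast}a^{k}$ and hence $a^{k}\in\mathcal{R}^{\{1,3\}}$ via Remark \ref{rem32}, and the reduction $a^{k}\mathcal{R}=a^{k+1}\mathcal{R}\Leftrightarrow a^{k}\mathcal{R}=a^{2k}\mathcal{R}$ by iterating $a^{k}=a^{k+1}s$. However, your route differs from the paper's in its implication graph and in one substantive choice. The paper proves the cycle $(i)\Rightarrow(ii)\Rightarrow(iii)\Rightarrow(i)$ together with $(i)\Leftrightarrow(iv)$: for $(i)\Rightarrow(ii)$ it takes $y=x^{k+1}$ and uses $ax=a^{k+1}x^{k+1}$; for $(ii)\Rightarrow(iii)$ it observes directly that $ay$ is a $\{1,3\}$-inverse of $a^{k}$ (slightly more economical than your detour through Remark \ref{rem32}, though both work); and for $(iii)\Rightarrow(i)$ it writes $a^{k}=a^{k+1}z$ and verifies by hand that $x=a^{k}z(a^{k})^{(1,3)}$ is a right pseudo core inverse of $a$. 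You instead close $(i)\Leftrightarrow(iii)$ by invoking the preceding theorem (right pseudo core invertibility of $a$ is equivalent to right core invertibility of $a^{k}$) together with Theorem \ref{the31} applied to $a^{k}$. Your approach buys economy by reusing the right core inverse machinery already established, while the paper's direct construction buys an explicit formula $a^{k}z(a^{k})^{(1,3)}$ for a right pseudo core inverse, which is of independent interest. Both treatments of $(i)\Leftrightarrow(iv)$ coincide: the identity $ax=a^{k+1}x^{k+1}$ forced by $x=ax^{2}$ makes the two statements literally the same.
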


\begin{proof}
$(i)\Rightarrow(ii)$ If $a$ is right pseudo core invertible,
according to the definition of the right pseudo core inverse,
then there exists $x\in \mathcal{R}$ such that
\begin{center}
$axa^{k} = a^{k}$ for some positive integer $k$, $ax^{2}=x$ and $(ax)^{\ast}=ax$.
\end{center}
Note that $ax=a(ax^{2})=a^{2}x^{2}=\cdots=a^{k+1}x^{k+1}$, write $y=x^{k+1}$, then we have $a^{k+1}y=a^{k+1}x^{k+1}=ax$ and
$a^{k+1}ya^k=axa^k=a^k$. We thus have $a^{k+1}ya^k=a^k$ and $(a^{k+1}y)^*=a^{k+1}y$.

$(ii)\Rightarrow(iii)$ From $a^{k+1}ya^k=a^k$ and $(a^{k+1}y)^*=a^{k+1}y$,
it follows that $ay$ is a $\{1,3\}$-inverse of $a^k$ and $a^{k}\mathcal{R}=a^{k+1}\mathcal{R}$.

$(iii)\Rightarrow(i)$ Suppose $a^k=a^{k+1}z$ for some $z\in \mathcal{R}$. Let $(a^k)^{(1,3)}$ is a  $\{1,3\}$-inverse of $a^{k}$.
Write $x=a^kz(a^k)^{(1,3)}$.
It is easy to check $x$ is a right pseudo core inverse of $a$.

$(i)\Leftrightarrow(iv)$ This equivalence can be easily check.
\end{proof}

\begin{theorem}
Let $a\in \mathcal{R}$. Then the following are equivalent:

$(i)$ $a$ is right pseudo core invertible;

$(ii)$ $\mathcal{R}a^k=\mathcal{R}(a^k)^{\ast}a^k$ and $a^k\mathcal{R}=a^{k+1}\mathcal{R}$ for some positive integer $k$;

$(iii)$ $\mathcal{R}a^k=\mathcal{R}(a^{\ast})^{k+1}a^k$ for some positive integer $k$.
\end{theorem}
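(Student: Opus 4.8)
The plan is to establish the two equivalences $(i)\Leftrightarrow(ii)$ and $(ii)\Leftrightarrow(iii)$ separately, treating this statement as the pseudo-core analogue of Corollary~\ref{cor33} with $a$ replaced by $a^k$. For $(i)\Leftrightarrow(ii)$ I would simply combine two results already available. Theorem~\ref{the47} gives that $a$ is right pseudo core invertible if and only if $a^k\in\mathcal{R}^{\{1,3\}}$ and $a^k\mathcal{R}=a^{k+1}\mathcal{R}$ for some positive integer $k$. Applying Remark~\ref{rem32} with $a^k$ in place of $a$ yields $a^k\in\mathcal{R}^{\{1,3\}}\Leftrightarrow\mathcal{R}a^k=\mathcal{R}(a^k)^\ast a^k$, and since $(a^k)^\ast=(a^\ast)^k$ this is exactly the first condition in $(ii)$. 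Thus $(ii)$ is nothing but a reformulation of the hypotheses in Theorem~\ref{the47}$(iii)$, and $(i)\Leftrightarrow(ii)$ follows at once.

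The substance lies in $(ii)\Leftrightarrow(iii)$, which I would carry out by a direct ring computation modeled on the proof of Corollary~\ref{cor33}$(ii)\Leftrightarrow(iii)$. For $(ii)\Rightarrow(iii)$, from $a^k\mathcal{R}=a^{k+1}\mathcal{R}$ write $a^k=a^{k+1}t$ and apply the involution to get $(a^\ast)^k=t^\ast(a^\ast)^{k+1}$; substituting this into the relation $a^k=s(a^k)^\ast a^k=s(a^\ast)^k a^k$ coming from $\mathcal{R}a^k=\mathcal{R}(a^k)^\ast a^k$ gives $a^k=st^\ast(a^\ast)^{k+1}a^k\in\mathcal{R}(a^\ast)^{k+1}a^k$. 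Since the reverse inclusion $\mathcal{R}(a^\ast)^{k+1}a^k\subseteq\mathcal{R}a^k$ is automatic, this yields $(iii)$. For $(iii)\Rightarrow(ii)$, start from $a^k=t(a^\ast)^{k+1}a^k$; regrouping $(a^\ast)^{k+1}=a^\ast(a^\ast)^k$ shows $\mathcal{R}a^k\subseteq\mathcal{R}(a^\ast)^ka^k$, so the equality $\mathcal{R}a^k=\mathcal{R}(a^k)^\ast a^k$ is immediate.

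The one delicate point, and the main obstacle, is recovering the factorization condition $a^k\mathcal{R}=a^{k+1}\mathcal{R}$ from $(iii)$. Here I would take the involution of $a^k=t(a^\ast)^{k+1}a^k$ to obtain $(a^\ast)^k=(a^\ast)^k a^{k+1}t^\ast$, and then compute
\[
t(a^\ast)^{k+1}=ta^\ast(a^\ast)^k=ta^\ast(a^\ast)^k a^{k+1}t^\ast=t(a^\ast)^{k+1}a^{k+1}t^\ast.
\]
Using the original relation in the form $t(a^\ast)^{k+1}a^{k+1}=\bigl(t(a^\ast)^{k+1}a^k\bigr)a=a^ka=a^{k+1}$, this collapses to $t(a^\ast)^{k+1}=a^{k+1}t^\ast$, whence $a^k=t(a^\ast)^{k+1}a^k=a^{k+1}t^\ast a^k\in a^{k+1}\mathcal{R}$ and therefore $a^k\mathcal{R}=a^{k+1}\mathcal{R}$. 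This finishes $(ii)\Leftrightarrow(iii)$ and, together with the first equivalence, completes the theorem. The only real care required is tracking the exponents and the order of substitution in this manipulation, since the bookkeeping is precisely what makes the chain of equalities close up.
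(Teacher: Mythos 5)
Your proposal is correct, and it follows exactly the route the paper intends: the paper's ``proof'' is only the remark that the argument is similar to Theorem~\ref{the31} (and Corollary~\ref{cor33}), and your write-up is precisely that analogue carried out for $a^k$, combining Theorem~\ref{the47} with Remark~\ref{rem32} for $(i)\Leftrightarrow(ii)$ and reproducing the $t(a^{\ast})^{k+1}=a^{k+1}t^{\ast}$ manipulation of Corollary~\ref{cor33} for $(ii)\Leftrightarrow(iii)$. All the exponent bookkeeping checks out, so nothing further is needed.
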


\begin{proof}
The proof is similar to Theorem \ref{the31}
\end{proof}

The matrix representations of right pseudo core invertible element and its right pseudo core inverse are presented in the following theorem.

\begin{theorem}
Let $a\in\mathcal{R}$. Then the following statements are equivalent:
\begin{itemize}
\item[(i)] $a$ is right pseudo core invertible and $x\in\mathcal{R}$ is a right pseudo core inverse of $a$;

\item[(ii)] there exists a projection $q\in\mathcal{R}$ such that
\begin{equation}a=\sbmatrix{cc} a_1&a_2\\a_3&a_4\endsbmatrix_q,\qquad
x=\sbmatrix{cc} x_1&x_2\\0&0\endsbmatrix_q,\label{a-x-right-ps-core}\end{equation}
where $a_1$ is right invertible in $qRq$, $x_1=(a_1)^{-1}_r$, $a_1x_2=0$, $a_3x_1=0$, $a_3x_2=0$ and $qa^k=a^k$ for some $k\geq 1$.
\end{itemize}
\end{theorem}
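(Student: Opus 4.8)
The plan is to take the projection to be $q=ax$ in both directions, exactly as in the right core case of Theorem \ref{the210}, the only new feature being that here $axa^{k}=a^{k}$ replaces $axa=a$, so the bottom row of the matrix of $a$ need not vanish.

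For $(i)\Rightarrow(ii)$, assume $x$ is a right pseudo core inverse of $a$, so $axa^{k}=a^{k}$, $x=ax^{2}$ and $(ax)^{\ast}=ax$ for some $k\geq 1$, and set $q=ax$. The condition $(ax)^{\ast}=ax$ makes $q$ self-adjoint, so the one genuine point to settle is that $q$ is idempotent. This is the main obstacle: unlike the core case, one cannot write $q^{2}=axax=(axa)x=ax$, because $axa=a$ is not available. Instead I would first iterate $x=ax^{2}$ to get $ax=a^{n}x^{n}$ for all $n\geq 1$ (induction, using the base identity $ax=a^{2}x^{2}$ to replace the inner factor $ax$), and note that $qa^{k}=axa^{k}=a^{k}$ gives $qa^{k+1}=a^{k+1}$. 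Then $q^{2}=q\cdot ax=q\,a^{k+1}x^{k+1}=(qa^{k+1})x^{k+1}=a^{k+1}x^{k+1}=ax=q$, so $q=q^{2}=q^{\ast}$ is a projection.

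With $q$ a projection, I would pass to the Peirce decomposition relative to $q$. Since $qx=ax\cdot x=ax^{2}=x$, we have $(1-q)x=0$, so the bottom row of $x$ vanishes and $x=\left[\begin{smallmatrix}x_{1}&x_{2}\\0&0\end{smallmatrix}\right]_{q}$ as in (\ref{a-x-right-ps-core}). Computing $ax$ blockwise and comparing with $ax=q=\left[\begin{smallmatrix}q&0\\0&0\end{smallmatrix}\right]_{q}$ reads off all the required relations: the $(1,1)$ entry gives $a_{1}x_{1}=q$, so $a_{1}$ is right invertible in $q\mathcal{R}q$ with $x_{1}=(a_{1})^{-1}_{r}$, while the remaining entries give $a_{1}x_{2}=0$, $a_{3}x_{1}=0$ and $a_{3}x_{2}=0$; finally $qa^{k}=a^{k}$ is just the defining equation rewritten. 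This is (ii).

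For $(ii)\Rightarrow(i)$ I would reverse the computation. From the prescribed forms together with $a_{1}x_{1}=q$, $a_{1}x_{2}=0$, $a_{3}x_{1}=0$, $a_{3}x_{2}=0$, block multiplication gives $ax=\left[\begin{smallmatrix}q&0\\0&0\end{smallmatrix}\right]_{q}=q$. Then $(ax)^{\ast}=q^{\ast}=q=ax$ since $q$ is a projection; $axa^{k}=qa^{k}=a^{k}$ by hypothesis; and $ax^{2}=qx=x$ because $x$ has zero bottom row, i.e. $x\in q\mathcal{R}$. Thus $x$ satisfies the three defining equations of a right pseudo core inverse, so $a$ is right pseudo core invertible with right pseudo core inverse $x$, completing the equivalence.
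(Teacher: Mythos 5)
Your proof is correct and follows essentially the same route as the paper: set $q=ax$, use $qx=x$ to force the bottom row of $x$ to vanish, and read all the block relations off the single identity $ax=q$, with the converse obtained by reversing the block computation. The only difference is that you explicitly verify that $q$ is idempotent (via $ax=a^{k+1}x^{k+1}$ and $qa^{k+1}=a^{k+1}$), a step the paper leaves implicit in ``the rest is clear''; this is a worthwhile piece of bookkeeping but not a different approach.
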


\begin{proof} $(i) \Rightarrow (ii)$ For $q=ax$, we get $qa^k=a^k$ and $qx=x$ implying (\ref{a-x-right-ps-core}).
Since $$\sbmatrix{cc} a_1x_1&a_1x_2\\a_3x_1&a_3x_2\endsbmatrix_q=ax=q=\sbmatrix{cc} q&0\\0&0\endsbmatrix_q,$$ the rest is clear.

$(ii) \Rightarrow (i)$ We prove this implication by elementary computations.
\end{proof}

\begin{remark}
In \cite{MDM}, the authors introduced the definition of weighted core inverse, called it $e$-core inverse of $a$, where $e$ is an invertible
Hermitian element. Similarly, the concept of one-sided $e$-core inverse of $a$ can be given:

Let $a\in \mathcal{R}$ and let $e\in \mathcal{R}$ be an invertible Hermitian element. Then
$a$ is right $($or left$)$ $e$--core invertible if $a$ is right $($left$)$ $(a,a^*e)$-invertible.

Recall that $a$ is right $e$--core invertible if and only if $a^*e\in a^*ea^2\mathcal{R}$
if and only if there exists $x\in \mathcal{R}$ such that $x\in a\mathcal{R}$ and $a^*eax=a^*e$.
Also, $a$ is left $e$--core invertible if and only if $a\in \mathcal{R}a^*ea^2$
if and only if there exists $x\in \mathcal{R}$ such that $x\in \mathcal{R}a^*e$ and $xa^2=a$.

It is not difficult to extend some characterizations of right core inverses to right $e$-core inverses.
Here we omit the details.
\end{remark}

Before the end of the section, we will provide the relation schema of several kinds of (generalized one-sided) core inverses.

\newpage

\includegraphics[width=6.5in]{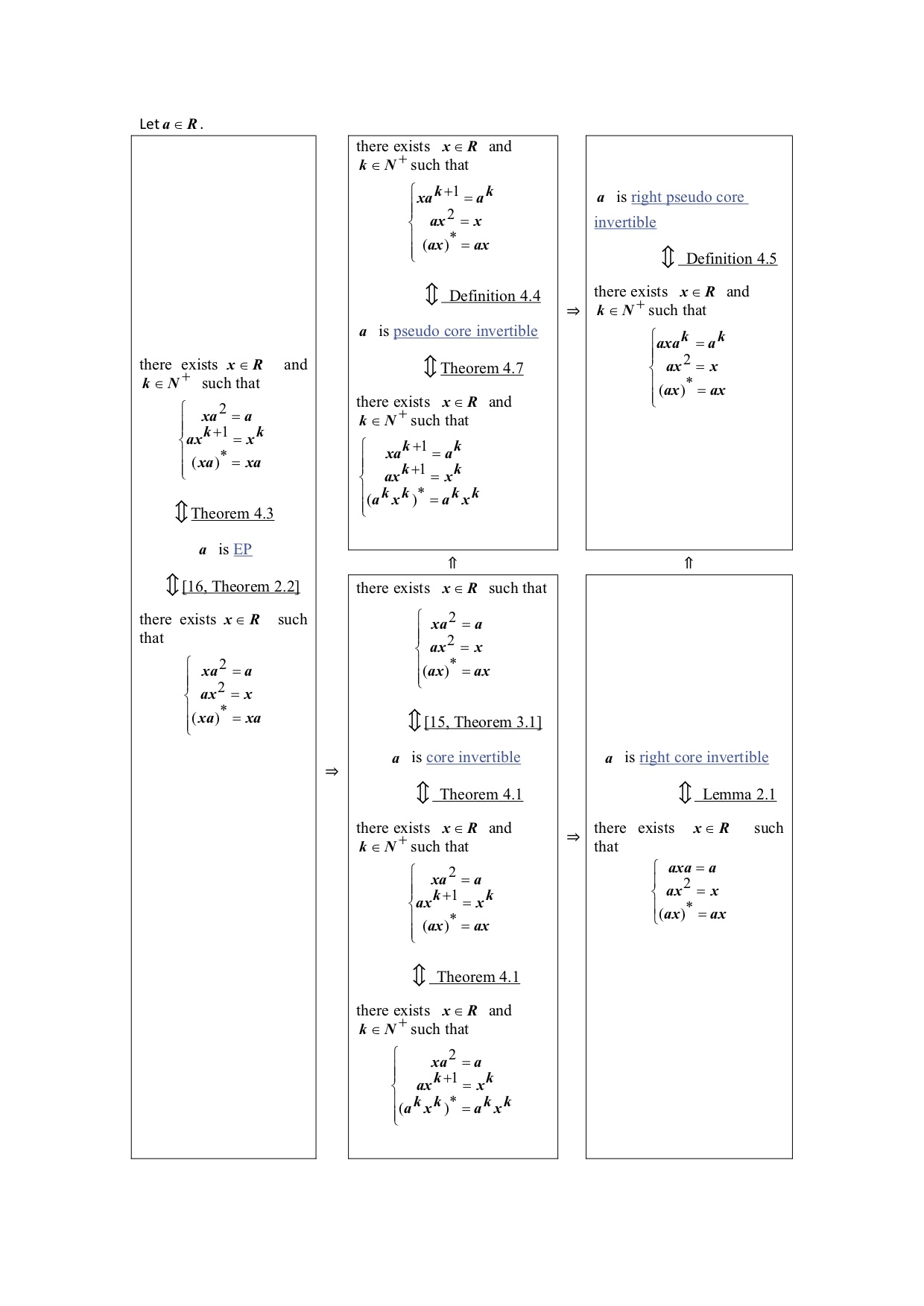}
\vspace{1mm}

\end{document}